\newtheorem{thm}{Theorem}[section]
\newtheorem{lem}[thm]{Lemma}
\newtheorem{prop}[thm]{Proposition}
\newtheorem{cor}[thm]{Corollary}
\newtheorem{defn}[thm]{Definition}
\newtheorem{rmk}[thm]{Remark}
\newtheorem{rmks}[thm]{Remarks}
\newtheorem{ques}[thm]{Question}
\newtheorem{note}[thm]{Note}
\newtheorem{ex}[thm]{Example}
\def\O{{\mathcal O}}
\def\P{{\mathbb P}}
\def\A{{\mathbb A}}
\def\I{{\mathcal I}}
\def\Z{{\mathbb Z}}
\def\C{{\mathbb C}}
\def\Pthree{{\mathbb P}^3}
\def\Ptwo{{\mathbb P}^2}
\def\Pic{\mathop{\rm Pic}}
\def\APic{\mathop{\rm APic}}
\def\Cl{\mathop{\rm Cl}}
\def\Picloc{\mathop{\rm Picloc}}
\def\Spec{\mathop{\rm Spec}}
\def\codim{\mathop {\rm codim}}
\def\mod{\mathop{\rm mod}}
\def\m{\mathop{\rm m}}
\def\fm{\mathfrak m}
\def\mult{\mathop{\rm mult}}
\def\deg{\mathop{\rm deg}}
\def\ra{\rightarrow}
\newcommand{\ubm}[2]{\underbrace{#1}_{#2}}
\newcommand{\cyc}[1]{\langle {#1} \rangle}
\def\rdA{\mathbf A}
\def\rdD{\mathbf D}
\def\rdE{\mathbf E}
\title{Local Picard groups}
\author{John Brevik}
\address{California State University at Long Beach, 
Department of Mathematics and Statistics, Long Beach, CA 90840}
\email{jbrevik@csulb.edu}
\author{Scott Nollet}
\address{Texas Christian University, Department of Mathematics, 
Fort Worth, TX 76129}
\email{s.nollet@tcu.edu}
\subjclass[2000]{Primary: 14B07, 14H10, 14H50}
\begin{document}
\bibliographystyle{plain}

\begin{abstract} 
We use our extension of the Noether-Lefschetz theorem to 
describe generators of the class groups at the local rings 
of singularities of very general hypersurfaces containing a fixed 
base locus. We give several applications, including (1) every 
subgroup of the class group of the completed local ring of a rational 
double point arises as the class group of such a singularity on a 
surface in $\Pthree_\C$ and (2) every complete local ring arising 
from a normal hypersurface singularity over $\C$ is the completion 
of a unique factorization domain of essentially finite type over $\C$. 
\end{abstract}

\maketitle

\section{Introduction}

While commutative algebra is foundational to algebraic geometry, 
there are growing applications in the other direction. For example, geometric methods have been used to obtain results 
about divisor class groups of normal domains \cite[$\S 3$]{srinivas}. 
The class groups of local rings at smooth points of an algebraic variety are trivial, 
but can exhibit interesting behavior at singular points. This suggests 
investigating class groups of local rings arising from general hypersurfaces $X \subset \P^n$ 
containing a fixed base locus $Y$ that is well-behaved enough so that $X$ is normal, yet 
ill-behaved enough so that $X$ is singular. Our extension of the 
Noether-Lefschetz theorem \cite[Theorems 1.1 and 1.7]{BN} 
leads to a natural geometric description of the generators for class groups 
of local rings:

\begin{thm}\label{gens}
Let $Y \subset \P_\C^n$ be superficial and $p \in Y$. 
Then a very general hypersurface $X$ of degree $d \gg 0$ containing $Y$ is normal and 
$\Cl \O_{X,p}$ is generated by supports of the codimension $2$
irreducible components of $Y$. 
In particular, $\dim Y < n-2 \Rightarrow \Cl \O_{X,p} = 0$.
\end{thm}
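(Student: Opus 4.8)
The plan is to pass from the global divisor class group $\Cl X$, which is governed by our Noether--Lefschetz theorem, to the local class group $\Cl \O_{X,p}$ by a standard excision argument. First I would record that $X$ is normal. Since $X$ is a hypersurface it is automatically Cohen--Macaulay, hence satisfies Serre's condition $S_2$, so by Serre's criterion normality reduces to regularity in codimension one ($R_1$). I expect this to follow from the superficiality hypothesis on $Y$ together with $d \gg 0$: a Bertini-type argument shows that a very general degree-$d$ member of the linear system of hypersurfaces through $Y$ is smooth away from a locus of codimension at least two in $X$, and the superficial condition is exactly what controls the singularities forced along $Y$.

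Next I would exploit that, for normal $X$, restriction realizes $\Cl \O_{X,p}$ as a quotient of $\Cl X$. Writing $\Cl \O_{X,p} = \varinjlim_{U \ni p} \Cl U$ and applying the excision sequence for class groups of normal integral schemes to each open neighborhood $U$ of $p$, one quotients $\Cl X$ by the classes of the prime divisors meeting $X \setminus U$; passing to the limit gives
$$\Cl \O_{X,p} \cong \Cl X \,/\, \langle\, [Z] : Z \subset X \text{ a prime divisor with } p \notin Z \,\rangle.$$
Thus the local class group is generated by the germs at $p$ of exactly those prime divisors on $X$ passing through $p$.

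Now I would invoke \cite[Theorems 1.1 and 1.7]{BN}: for very general $X$ of degree $d \gg 0$ containing the superficial locus $Y$, the group $\Cl X$ is generated by the hyperplane class $[\O_X(1)]$ together with the classes $[Z_1], \dots, [Z_k]$ of the codimension-two irreducible components of $Y$ (each $Z_i$, having dimension $n-2$, is a prime divisor on the $(n-1)$-dimensional $X$). The hyperplane class is represented by $H \cap X$ for a general hyperplane $H$, which may be chosen to miss $p$; hence $[\O_X(1)]$ lies in the kernel of the quotient displayed above. Combining these two facts, $\Cl \O_{X,p}$ is generated by the images of the $[Z_i]$, that is, by the supports of the codimension-two components of $Y$, those $Z_i$ not passing through $p$ simply contributing the zero class. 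For the final assertion, if $\dim Y < n-2$ then $Y$ has no codimension-two component, so the generating set is empty and $\Cl \O_{X,p} = 0$; equivalently $\O_{X,p}$ is factorial.

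The main obstacle is not in this reduction but in its input. One must ensure that \cite{BN} computes the full Weil class group $\Cl X$ of the \emph{singular} hypersurface $X$, and not merely $\Pic X$, and that its generators are precisely $[\O_X(1)]$ and the codimension-two components of $Y$. The delicate bookkeeping is in matching the abstract generators produced by the Noether--Lefschetz argument with the concrete supports of the components of $Y$, and in verifying that the superficiality hypothesis is strong enough both to force the normality of Step~1 and to exclude any extra, accidental divisor classes on the very general member $X$.
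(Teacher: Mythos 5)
Your proposal is correct and follows essentially the same route as the paper: invoke \cite[Theorems 1.1 and 1.7]{BN} for normality and for the generation of $\Cl X$ by $\O_X(1)$ together with the codimension-two components of $Y$, then pass to $\Cl \O_{X,p}$ via the surjective restriction map, under which $\O_X(1)$ and every component missing $p$ become trivial. The only cosmetic difference is how surjectivity of $\Cl X \to \Cl \O_{X,p}$ is justified: you use the excision/direct-limit description $\Cl \O_{X,p} \cong \Cl X / \langle [Z] : p \notin Z\rangle$, while the paper's Proposition~\ref{interpret} extends a Cartier divisor on the punctured local spectrum to a global Weil divisor; both are standard and yield the same conclusion.
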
 

Theorem \ref{gens} provides a powerful tool for producing local rings whose class group 
satisfies some given property, as illustrated by the applications that follow. Regarding terminology, 
a closed subscheme $Y \subset \P^n$ is {\em superficial} if 
$\dim Y \leq n-2$ and the locus of embedding dimension $n$ points has dimension $\leq n-3$; 
equivalently, $Y$ lies properly on a normal hypersurface. The term {\em very general} 
refers to the complement of a countable union of proper subvarieties in the parameter space 
$\P\left(H^0\left(\P^3,\I_Y (d)\right)\right)$ and in the theorem we may take $d \geq l+2$, 
where $\I_Y (l)$ is generated by global sections. The natural map 
$\Cl \O_{X,p} \rightarrow \Cl \widehat \O_{X,p}$ being injective, we may identify $\Cl \O_{X,p}$ 
with the subgroup of $\Cl \widehat \O_{X,p}$ generated by the classes described in the theorem. 

In studying the $K$-theory of rational double points, Srinivas proved that the corresponding local 
rings are completions of UFDs \cite{srinivas1}. More generally Srinivas and Parameswaran used singular 
homology, monodromy and Picard-Lefschetz theory to prove that the local ring of {\em any} isolated local 
complete intersection singularity over $\C$ is the completion of a UFD, leading them 
to ask whether every normal complete Gorenstein local ring with coefficient field $\C$ is the completion 
of a UFD which is essentially of finite type over $\C$ \cite[Question 1]{parasrini}. 
Note that the Gorenstein condition is indispensable for such rings, since the dualizing module exists 
and corresponds to an element of the class group \cite{murthy}. 
The answer is positive in dimension $\geq 4$ by the Grothendieck-Lefschetz theorem, so the main interest is 
in dimensions two and three. 
Our contribution handles arbitrary hypersurface singularities:
\begin{thm}\label{ufd} 
Let $A=\C[[x_1, \dots x_n]]/f$, where $f$ is a polynomial defining a 
variety $V$ which is normal at the origin. 
Then there exists an algebraic hypersurface $X \subset \P^n_\C$ 
and a point $p \in X$ such that $R=\O_{X,p}$ is a UFD and $\widehat R \cong A$. 
\end{thm}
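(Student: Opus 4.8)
The plan is to combine the standard UFD criterion with Theorem \ref{gens}. Recall that a normal Noetherian local domain is a UFD precisely when its divisor class group vanishes; since the $X$ I produce will be normal, $R=\O_{X,p}$ is a UFD if and only if $\Cl\O_{X,p}=0$. Using the injectivity of $\Cl\O_{X,p}\to\Cl\widehat\O_{X,p}$ together with Theorem \ref{gens}, if I arrange for $X$ to be a very general hypersurface of degree $d\gg0$ through a superficial base locus $Y\ni p$ all of whose components through $p$ have dimension $\le n-3$, then $\Cl\O_{X,p}$, being generated by the codimension-$2$ components of $Y$, is forced to be $0$. Thus the only real content is to produce such an $X$ whose completion at $p$ is isomorphic to the prescribed ring $A$; triviality of the class group then comes for free.

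To realize $A$ I would homogenize. After discarding the trivial case $f\notin\fm^2$ (where $A$ is regular), put $p=[1:0:\cdots:0]$ and identify $\A^n$ with the chart $\{x_0\ne0\}$, so that the projective closure $\overline V$ of $V=V(f)$ satisfies $\widehat\O_{\overline V,p}\cong A$. Let $\Sigma\subset V$ be the singular locus near $p$; normality of $V$ at the origin gives $\dim\Sigma\le n-3$. I would take the base locus $Y$ to be a subscheme supported on the closure of $\Sigma$, thickened if necessary but kept of dimension $\le n-3$, so that $Y$ is superficial (its locus of embedding dimension $n$ lies inside $Y$, hence has dimension $\le n-3$) and has no codimension-$2$ components. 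By construction $\overline V\in|\I_Y(d)|$ for $d\ge\deg f$, and $\overline V$ already exhibits the desired completion at $p$.

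The crux is to pass from the single hypersurface $\overline V$ to a very general member of $|\I_Y(d)|$ without disturbing the analytic germ at $p$. First I would treat the isolated-singularity case, which in particular covers all surfaces ($n=3$), where normality forces $\Sigma$ to be a point. Here $f$ is finitely determined, so replacing its $N$-jet for $N\gg0$ leaves $A$ unchanged; I would therefore impose that $X$ share the $N$-jet of $\overline V$ at $p$ --- finitely many linear conditions cutting out an affine-linear subsystem $L\subset|\I_Y(d)|$ --- and be otherwise very general in $L$. The exceptional set in Theorem \ref{gens} is a countable union of proper subvarieties, and, because the class-group computation there is local and compatible with prescribing a jet that already contains $Y$, rerunning that argument on $L$ should show the exceptional set meets $L$ in a proper (countable) union of subvarieties; a very general member of $L$ then has $\widehat\O_{X,p}\cong A$ and $\Cl\O_{X,p}=0$ simultaneously. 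As a localization of an affine coordinate ring of $X$ at a maximal ideal, $R=\O_{X,p}$ is automatically essentially of finite type over $\C$.

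The hard part will be the non-isolated case ($n\ge4$), where finite determinacy fails and no finite jet of $f$ determines $A$. My approach would be to force the singular germ through the base locus itself: enlarge $Y$ so that it carries $\Sigma$ with a prescribed infinitesimal structure, and perturb $\overline V$ only by forms vanishing to high order along $Y\supseteq\Sigma$, invoking a determinacy statement relative to the singular locus to guarantee that the analytic germ at $p$ --- and hence $A$ --- is preserved across the family. Establishing such a relative determinacy, and verifying that the resulting constrained subsystem still avoids the Noether--Lefschetz exceptional locus, is where I expect the essential difficulty to lie.
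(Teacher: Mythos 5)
Your skeleton---force $\Cl\O_{X,p}=0$ via Theorem~\ref{gens} by choosing a base locus of dimension $\le n-3$ through $p$, and separately arrange $\widehat\O_{X,p}\cong A$---is the paper's, but your proof has a genuine gap exactly where you place the ``essential difficulty'': you never supply the relative determinacy statement, so the non-isolated case $n\ge 4$ (the only case not already settled by Parameswaran--Srinivas) remains open. The missing idea is Lemma~\ref{Ruiz} (Ruiz's criterion): if $f-g\in\fm\cdot(f_{x_1},\dots,f_{x_n})^2$ then $\C[[x_1,\dots,x_n]]/(f)\cong\C[[x_1,\dots,x_n]]/(g)$. With it in hand, the right move is not to thicken $\Sigma$ ``with a prescribed infinitesimal structure'' and hope for a determinacy theorem, but to take the concrete base locus $Y$ defined by $I_Y=(f,f_{x_1}^3,\dots,f_{x_n}^3)$, where we write $J_f=(f_{x_1},\dots,f_{x_n})$. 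This $Y$ is supported on the singular locus of $V$, hence has dimension $\le n-3$ by normality (so it is superficial and Theorem~\ref{gens} gives $\Cl\O_{X,p}=0$); and, crucially, \emph{every} hypersurface containing $Y$ has local equation, after scaling by a unit, $g=f+\sum a_i f_{x_i}^3$, so that $f-g\in J_f^3\subseteq\fm J_f^2$ and Ruiz's lemma identifies $\widehat\O_{X,p}$ with $A$ for \emph{all} members of the linear system at once. Genericity is used only for the class group; the analytic type is preserved automatically, with no jet prescription and no need to track how a Noether--Lefschetz exceptional locus meets a subsystem.

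The same defect already infects your isolated case: Theorem~\ref{gens} concerns the very general member of the full system $\P\left(H^0\left(\I_Y(d)\right)\right)$, and you cannot simply assert that its countable exceptional locus meets your affine-linear jet subsystem $L$ properly---a priori it could contain $L$, and ``rerunning'' the Noether--Lefschetz argument on $L$ is an unproved (and nontrivial) extension. The repair, which is the paper's own alternative argument for isolated singularities, is again to package the constraint as a base locus: take $Z$ defined by $(f,x_1^{N+2},\dots,x_n^{N+2})$, where $N$ satisfies $(x_1,\dots,x_n)^N\subseteq(f)+J_f$; then a very general $X\supset Z$ has local equation $f+\sum a_ix_i^{N+2}$ with $a_i$ units, Theorem~\ref{gens} applies verbatim to $Z$, and Mather--Yau (or finite determinacy) identifies the germ. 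The lesson in both cases is the same: the germ-preservation condition must be encoded as membership in the linear system of a suitable base locus, which is precisely what the Jacobian-cube ideal accomplishes uniformly, with Ruiz's lemma replacing any appeal to determinacy.
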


Our proof follows quickly from Theorem \ref{gens} and a 
criterion of Ruiz for recognizing isomorphic quotients 
of power series rings \cite[V, Lemma 2.2]{ruiz}. 
We conjecture the analogous result for arbitrary normal local complete intersection singularities. 

In an interesting survey article, Srinivas asks for a normal local ring $B$, which subgroups 
$\Cl A \hookrightarrow \Cl B$ arise among local rings $A$ for which 
$\widehat A = B$ \cite[Question 3.1]{srinivas}. 
Mohan Kumar proved that for almost all $\rdA_n$ and $\rdE_n$ type singularities 
on a {\em rational} surface over $\C$, the analytic isomorphism class determines the algebraic 
isomorphism class; the exceptions are $\rdA_7, \rdA_8$ and $\rdE_8$, for which there 
are two possibilities each \cite{MK}. 
In particular, the possibility for $\Cl (A) \hookrightarrow \Cl (\widehat A)$ is unique. 
Our result stands in stark contrast to his: once the rationality assumption is dropped, {\em every} possible 
subgroup arises as the class group of a local ring of a point on a surface $S \subset \Pthree$:

\begin{thm}\label{applic2}
Fix $T \in \{\rdA_n, \rdD_n, \rdE_6, \rdE_7, \rdE_8\}$ and a subgroup $H$ of the class group of the completed 
local ring for a singularity of type $T$. 
Then there exists an algebraic surface $S \subset \Pthree_\C$ and a rational double point $p \in S$ of type $T$ such that 
$\Cl \O_{S,p} \cong H$.
\end{thm}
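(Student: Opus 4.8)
The plan is to realize $H$ via Theorem \ref{gens}, by building a base locus $Y$ whose codimension-two components through $p$ carry exactly the prescribed divisor classes, while an embedded structure at $p$ forces the very general surface through $Y$ to acquire a rational double point of precisely type $T$. First I would recall the structure of $G_T := \Cl \widehat \O_{S,p}$ for a rational double point of type $T$: it is the discriminant group $L^\vee/L$ of the associated negative-definite root lattice $L$, namely $\Z/(n+1)$ for $\rdA_n$, either $\Z/4$ or $(\Z/2)^2$ for $\rdD_n$ according as $n$ is odd or even, and $\Z/3, \Z/2, 0$ for $\rdE_6, \rdE_7, \rdE_8$; in particular the hypothesis forces $H = 0$ when $T = \rdE_8$. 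On the minimal resolution $\pi \colon \widetilde S \to S$ the exceptional curves $E_1, \dots, E_r$ are $(-2)$-curves whose intersection matrix is $-C_T$, the Cartan matrix of $T$, and the class in $G_T = \Z^r/C_T\Z^r$ of a curve $D$ through $p$ is the residue of its intersection vector $(\widetilde D \cdot E_1, \dots, \widetilde D \cdot E_r)$. Thus realizing a chosen generating set of $H$ reduces to producing curves through $p$ with prescribed intersection numbers against the $E_i$, which is a matter of lattice bookkeeping together with the construction of effective divisors on $\widetilde S$ in assigned numerical classes.

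Next I would assemble the base locus. Starting from a concrete surface $S_0 \subset \Pthree$ carrying a singularity of type $T$ at $p$ — for instance one cut out near $p$ by a classical normal form $f_T \in \C[[x,y,z]]$ — I would select curves $C_1, \dots, C_m \subset S_0$ through $p$ whose intersection vectors against the $E_i$ generate $H$ as above, and set $Y$ equal to their union enriched by an embedded zero-dimensional scheme $Q$ supported at $p$. The curves supply the codimension-two components that Theorem \ref{gens} uses as generators, while $Q$ is permitted because superficiality forbids embedding-dimension-three points only in dimension exceeding $n-3 = 0$; one checks directly that $Y$ is a curve whose sole embedding-dimension-three point is $p$, hence superficial. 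The role of $Q$ is to shrink the local ideal $I = \I_{Y,p} \subseteq \fm^2 \subset \O_{\Pthree,p} \cong \C[[x,y,z]]$ enough to force the singularity type.

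The heart of the argument is to show that a very general $X \supseteq Y$ has a singularity of exactly type $T$ at $p$. Here I would invoke that rational double points are finitely determined: type $T$ is detected by the $k$-jet of the local equation for a fixed $k = k(T)$, and the jets of type $T$ form a dense open subset of the closure of the right-equivalence orbit of $j^k f_T$ in the space of $k$-jets. The plan is to choose $Q$ so that the linear space $j^k I$ lies inside this orbit closure; since it is irreducible and contains the orbit point $j^k f_T$ (as $f_T \in I$), its intersection with the orbit is open, nonempty and dense, so the generic member has type exactly $T$, the more degenerate locus being a proper closed condition avoided by the very general $X$. With the type pinned down we have $\Cl \widehat \O_{X,p} = G_T$, and Theorem \ref{gens} identifies $\Cl \O_{X,p}$ with the subgroup of $G_T$ generated by the classes of $C_1, \dots, C_m$; a final computation on the resolution of $X$ confirms these classes remain the prescribed generators, so $\Cl \O_{X,p} \cong H$.

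I expect the main obstacle to be precisely this local analysis, namely forcing the singularity to be of type $T$ and controlling the divisor classes of the $C_i$ at the same time. Theorem \ref{gens} governs the class group once the base locus is given but says nothing about the singularity type, so the genuine tension is that the constraint $j^k I \subseteq \overline{\operatorname{orbit}(j^k f_T)}$ pushes toward making $I$ small (taking $Q$ large), whereas the presence of the curves $C_1, \dots, C_m$ forces $I$ to remain large in prescribed directions. Balancing these is where the real work lies, and the computations are cleanest carried out type by type, exploiting that each $G_T$ is small so that only a bounded number of generators must be realized.
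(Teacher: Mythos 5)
Your skeleton is the same as the paper's: apply Theorem \ref{gens} to a base locus consisting of curves through $p$ that carry the prescribed classes, together with an embedded point at $p$ whose job is to force the very general surface to acquire a singularity of exact type $T$, then compute the curve classes on the resolution. The lattice-theoretic facts you cite are correct, and your observation that superficiality tolerates an isolated embedding-dimension-three point is exactly what makes the construction legal. But the proposal defers precisely the two steps that constitute the proof. First, you never produce $Q$. Your criterion ``choose $Q$ so that $j^k I$ lies in the orbit closure of $j^k f_T$'' is logically equivalent to the statement ``the generic element of $I$ has a singularity of type exactly $T$'' (the orbit is open in its closure and $j^kI$ is an irreducible linear space containing $j^kf_T$), so invoking finite determinacy does not discharge the verification; it restates it. Whether such a $Q$ exists compatibly with $I$ containing the ideals of curves in arbitrarily prescribed classes is the entire tension of the problem, and you say yourself that resolving it is ``where the real work lies.'' The paper resolves it not by a general jet-space argument but by exhibiting explicit ideals type by type --- $(x^2,xy,xz^q-y^{m-1},y^m)$ for the proper subgroups at an $\rdA_n$ point (Proposition \ref{step1}), $(x^2,y^2z,z^{n-1},xy^n)$ for the subgroup $\langle u_1\rangle$ at a $\rdD_n$ point, $(x^2,y^2z-z^{2r-1},y^5-z^{5r-5})$ for the conjugate order-two subgroups when $n=2r$ --- and by verifying through explicit coordinate-change lemmas (Lemmas \ref{comproots}, \ref{dnen}, \ref{xyfactor}) that the very general member has exactly the intended type.

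Second, the sentence ``a final computation on the resolution of $X$ confirms these classes remain the prescribed generators'' conceals the other half of the work. The classes were prescribed on a model surface $S_0$, but the surface produced by Theorem \ref{gens} is a different surface, and there is no canonical identification of $\Cl \widehat\O_{X,p}$ with $\Cl \widehat\O_{S_0,p}$; one must follow the curve ideals through the coordinate changes that normalize the equation of $X$ and through the blow-ups of its resolution. This is delicate and is where much of the paper's effort goes: in Proposition \ref{step1} the ideal $(x,y)$ is tracked through five changes of variables before Example \ref{image}(c) can be applied, and in the $\rdD_n$ case with $n$ even the three order-two subgroups $\langle u_1\rangle$, $\langle u_{n-1}\rangle$, $\langle u_n\rangle$ require genuinely different base loci (a reduced line with an embedded point versus a double structure on a smooth curve with an embedded point), the whole point being to control \emph{which} admissible exceptional curve the strict transform meets. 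Your ``lattice bookkeeping'' phrase passes over this. In short: the plan is the right plan --- essentially the paper's own --- but as written it is a program with its two central verifications left open, not a proof.
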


Our proof applies Theorem \ref{gens} to a suitable base locus $Y$, when we can use 
power series techniques to compute the class group in the well-known class groups of the completed local 
rings \cite{artin,lipman}. For $\rdA_n$ singularities we used a 
Cohen-Macaulay multiplicity structure on a smooth curve for $Y$, but for $\rdD_n$ singularities 
we found it necessary to use a line with an embedded point at $p$. 
In the case of a $\rdD_n$ singularity given locally by $x^2 + y^2 z + z^{n-1} = 0$, 
the completed local ring has class group $\Z / 4 \Z$ when $n$ is odd and 
$\Z / 2 \Z \oplus \Z / 2 \Z$ when $n$ is even. In the odd case there is only one 
subgroup of order $2$, but in the even case there are {\em three} of them, two of which 
are indistinguishable up to automorphism of the complete local ring, since their generators 
correspond to conjugate points in the associated Dynkin diagram \cite{J2}. In this case we 
prove the stronger statement that each of the two non-equivalent subgroups arises 
as the class group of a local ring on a surface. 

We also apply our method to general surfaces containing a cone over a set of points. 
For the completed local ring of the vertex of a cone over an elliptic curve, we answer 
Srinivas' question at least modulo a subgroup of co-index 3 ({\em Cf.} Proposition~\ref{ellipticcone} below). The proof uses a combination of the Mather-Yau theorem \cite{MY} from complex analysis and Theorem \ref{gens} with base locus $Y$ consisting of a union of lines which 
naturally generate the subgroup $H$ (or $H'$) along with an embedded point at the vertex. 
We also compute the local Picard groups for a very general surface containing a set of lines 
in general position. 

We work throughout over the field $k = \C$ of complex numbers except as noted. 
While the base locus $Y$ should be projective to apply Theorem \ref{gens}, we often give a local ideal for 
$Y \subset \mathbb A^n$ and apply the theorem to $\overline Y \subset \P^n$. 

\section{Local Picard groups}

In this section we will give some basic properties of the local Picard group and prove 
Theorem \ref{gens} from the introduction. 
Class groups of normal (local) rings have been objects of study in commutative algebra  
for a long time, while in algebraic geometry there has been more interest in the Picard 
group of smooth varieties. In his study of curves on a singular surface $S$ \cite{J1,J2}, 
Jaffe introduced an exact sequence relating the Picard group, class group, and class groups of 
the local rings at the singularities of $S$ (Remark \ref{kadinsky} (a)), 
which he called local Picard groups. We will use the same terminology here.
 

\begin{defn}\label{picloc}{\em Let $p$ be a point on a variety $X$. 
The {\it local Picard group} of $X$ at $p$ is the group $\Picloc p = \APic (\Spec \O_{X,p})$ 
of almost Cartier divisors of the spectrum of the local ring $\O_{X,p}$. 
The {\it formal local Picard group} is $\widehat{\Picloc}\, p = \APic (\Spec \widehat \O_{X,p})$. We will 
write $\Picloc_{X} p$ or $\widehat{\Picloc}_X \, p$ if $X$ is not clear from context.
\em}\end{defn}

\begin{note}{\em 
Jaffe's original definition $\Picloc p = \Pic (\Spec \O_{X,p} \setminus \{p\})$ 
agrees with ours if $X$ is a normal surface and more generally if $X$ is Gorenstein 
in codimension $\leq 1$ and $S_2$ (Serre's condition), when both amount to the divisor 
class group $\Cl (\Spec \O_{X,p}) = \Cl (\O_{X,p})$ \cite[2.15.1]{GD}. 
Therefore when $X$ is normal, we will often write $\Cl (\O_{X,p})$ and $\Cl (\widehat \O_{X,p})$. 
\em}\end{note}

\begin{rmks}\label{kadinsky}{\em We mention some past work involving local Picard groups. 

(a) For an integral surface $S$, Jaffe proved the exact sequence \cite[3.2]{J1}  
\begin{equation}\label{jaffe}
0 \to \Pic S \to \APic S \stackrel{\theta}{\to} \bigoplus_{p \in S} \Picloc p.
\end{equation}
and used it to classify set theoretic complete intersections 
in $\Pthree$ with a cone and another surface. 
Hartshorne extended this sequence to schemes satisfying $G_1$ and $S_2$ \cite[2.15]{GD}.

(b) Lipman computed the formal local Picard groups for pseudo-rational surface singularities 
in terms of the intersection matrix for the exceptional divisors arising from the minimal 
desingularization \cite[$\S 24$]{lipman}.

(c) Parameswaran and Srinivas showed that every completion of a local ring at an isolated local complete intersection 
singularity of a variety over $\C$ is the completion of a unique factorization domain \cite{parasrini}.
 
(d) Hartshorne used a relative version of the local Picard group for degenerating 
families of cubic surfaces in his solution to Zeuthen's problem \cite{Zeuthen}. 
\em}\end{rmks}

On a normal variety the local Picard group can be identified with subgroup 
of the formal Picard group which lift to global Weil divisors: 

\begin{prop}\label{interpret}
Let $p \in X$ be a normal singularity. 
Then $\Cl (\O_{X,p})$ may be identified with the image of the natural 
restriction map $\Cl X \to \Cl \widehat \O_{X,p}$. 
\end{prop}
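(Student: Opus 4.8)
The plan is to factor the restriction map of the statement as a composite
$$\Cl X \xrightarrow{\ \rho\ } \Cl \O_{X,p} \xrightarrow{\ c\ } \Cl \widehat \O_{X,p},$$
where $\rho$ is induced by localization and $c$ by completion. Both targets are normal domains (the completion because $\O_{X,p}$, being essentially of finite type over $\C$, is excellent), so divisor class groups and the induced maps make sense. Granting the factorization, the image of the composite is $c\bigl(\Im \rho\bigr)$, and the proposition reduces to two independent claims: (i) $\rho$ is surjective, and (ii) $c$ is injective. Indeed, (ii) lets $c$ identify $\Cl \O_{X,p}$ with its image, and then (i) gives $c(\Cl \O_{X,p}) = c(\Im \rho) = \Im\bigl(\Cl X \to \Cl \widehat \O_{X,p}\bigr)$, which is precisely the assertion.

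For (i), I would use the standard localization description of the class group of a normal variety. The prime divisors of $\Spec \O_{X,p}$ are exactly the height-one primes $\mathfrak q \subset \O_{X,p}$, which correspond to the codimension-one subvarieties $Z \subset X$ with $p \in Z$; taking the closure of such a $Z$ produces a Weil divisor on $X$ restricting to $\mathfrak q$, so $\rho$ hits every generator of $\Cl \O_{X,p}$. This is encoded in the excision exact sequence $\bigoplus_{Z \not\ni p} \Z \to \Cl X \xrightarrow{\rho} \Cl \O_{X,p} \to 0$, whose first arrow records the prime divisors not passing through $p$, which die under localization. This step is routine.

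The hard part is (ii), the injectivity of $c$. The essential input is that $\O_{X,p}$ is \emph{analytically normal}, which holds because it is excellent; hence $\O_{X,p} \to \widehat \O_{X,p}$ is a faithfully flat local homomorphism of normal local domains. Given a divisorial ideal $I \subset \O_{X,p}$ with $[I\widehat\O_{X,p}] = 0$ in $\Cl \widehat \O_{X,p}$, the extension $I \otimes_{\O_{X,p}} \widehat \O_{X,p}$ is free of rank one, and by faithfully flat descent of freeness for finitely presented modules $I$ is itself free of rank one, so $[I] = 0$. I expect this descent — together with the accompanying check that completion carries reflexive rank-one modules to reflexive rank-one modules, which is exactly where analytic normality (rather than mere normality) is indispensable — to be the crux of the argument. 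With (i) and (ii) established, the identification follows formally as above.
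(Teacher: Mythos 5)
Your proposal is correct, and it shares the paper's skeleton: both factor the restriction map as $\Cl X \to \Cl \O_{X,p} \to \Cl \widehat\O_{X,p}$ and reduce the statement to surjectivity of the first arrow plus injectivity of the second. The substance differs on both halves, though. For injectivity the paper offers no proof at all, calling the fact well known and pointing to \cite[Lemma 1.10]{Zeuthen}; your argument via excellence, analytic normality, and faithfully flat descent of freeness applied to a divisorial ideal is a sound, self-contained replacement (one small correction of emphasis: flat base change alone carries reflexive rank-one modules to reflexive rank-one modules, whereas analytic normality is what makes $\widehat\O_{X,p}$ a normal domain, so that $\Cl\widehat\O_{X,p}$ and the map into it are defined in the first place). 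For surjectivity you invoke excision: every height-one prime of $\O_{X,p}$ is the localization of a codimension-one subvariety of $X$ through $p$, so every generator lifts. The paper argues differently: it asserts that $\Spec\O_{X,p}\setminus\{p\}$ is smooth, so a class in $\Cl\O_{X,p}$ is Cartier there, then extends the local equation to an affine neighborhood $U$ and concludes from surjectivity of $\Cl X \to \Cl U$. That route implicitly requires the singularity to be isolated (automatic for normal surfaces, the paper's principal case, but not for normal varieties of higher dimension), whereas your excision argument needs no such hypothesis; in that respect your treatment of this half is the more robust one, at no extra cost.
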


\begin{proof}
The restriction map factors as $\Cl X \to \Cl \O_{X,p} \to \Cl \widehat \O_{X,p}$ and 
it is well known that the second map is injective 
\footnote{See \cite[Lemma 1.10]{Zeuthen} for a generalization to the non-normal case.}.
Since $X$ is regular in codimension $1$, $\Spec (\O_{X,p} - \{p\})$ is smooth, 
hence any divisor in $\Cl (\O_{X,p}) = \Pic(\Spec \O_{X,p} - \{p\})$ is Cartier 
and the local equation extends to an open affine neighborhood $U$ of $p$, hence to a global 
Weil divisor since the map $\Cl X \to \Cl U$ is surjective. It follows that $\Cl X \to \Cl \O_{X,p}$ 
is surjective and the result follows.
\end{proof}

For very general hypersurfaces $X \subset \P^n_\C$ containing a fixed 
{\it superficial} base locus $Y$ (see the definition following Theorem~\ref{gens} in the introduction), 
our recent work in Noether-Lefschetz theory \cite{BN} combines with Proposition \ref{interpret} 
to describe natural geometric generators for local Picard groups. We restate Theorem \ref{gens} for convenience: 

\begin{thm}\label{method}
Let $Y \subset \P_\C^n$ be superficial and $p \in Y$. 
Then a very general hypersurface $X$ of degree $d \gg 0$ containing $Y$ is normal and 
$\Cl \O_{X,p}$ is generated by supports of the codimension $2$
irreducible components of $Y$. 
In particular, $\dim Y < n-2 \Rightarrow \Cl \O_{X,p} = 0$.
\end{thm}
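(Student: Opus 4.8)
The plan is to combine the Noether--Lefschetz description of $\Cl X$ from \cite{BN} with the local interpretation furnished by Proposition \ref{interpret}, and then to discard the generators that become trivial in the local ring at $p$. The three ingredients are: normality of $X$, global generation of $\Cl X$, and localization at $p$.

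First I would dispose of normality. Since $Y$ is superficial it lies properly on a normal hypersurface, and for $d \geq l+2$ a very general member $X$ of the linear system $\I_Y(d)$ is a hypersurface, hence Cohen--Macaulay and in particular $S_2$. Away from $Y$ the system is base-point free, so Bertini makes $X$ smooth there; along $Y$ one can make $X$ smooth except where $Y$ has embedding dimension $n$, which by hypothesis has dimension $\leq n-3$. Thus $\supp(\text{Sing } X)$ has dimension $\leq n-3$, i.e.\ codimension $\geq 2$ in the $(n-1)$-fold $X$, so $X$ is regular in codimension $1$; Serre's criterion then yields normality. (This is essentially the normality assertion already built into the setup of \cite{BN}.)

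Next I would compute the global class group. Applying \cite[Theorems 1.1 and 1.7]{BN} to a very general $X$ of degree $d \gg 0$ containing $Y$, the group $\Cl X$ is generated by the hyperplane class $H$ together with the classes of the codimension-$2$ irreducible components $Z_1, \dots, Z_r$ of $Y$, each of which is a Weil divisor on $X$. This is the substantive input and the genuine obstacle: it rests on the monodromy and infinitesimal arguments of our earlier work, including the control of the countable union of bad loci that defines ``very general.'' Here I would simply invoke it.

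Finally I would localize via Proposition \ref{interpret}, which identifies $\Cl \O_{X,p}$ with the image of the restriction map $\Cl X \to \Cl \widehat \O_{X,p}$. Since all hyperplanes are linearly equivalent, I may represent $H$ by one avoiding $p$; its restriction to $\Spec \O_{X,p}$ is the zero divisor, so $H \mapsto 0$. Likewise any $Z_i$ not passing through $p$ restricts to $0$. Hence the image is generated by the supports of those $Z_i$ meeting $p$, i.e.\ by the supports of the codimension-$2$ components of $Y$, as claimed. When $\dim Y < n-2$ there are no codimension-$2$ components at all, so the generating set is empty and $\Cl \O_{X,p} = 0$.
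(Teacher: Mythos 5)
Your proposal is correct and follows essentially the same route as the paper: invoke \cite[Theorems 1.1 and 1.7]{BN} for normality and for the generation of $\Cl X$ by $\O_X(1)$ and the codimension-$2$ components of $Y$, then use Proposition \ref{interpret} to identify $\Cl \O_{X,p}$ with the image of $\Cl X$ and observe that $\O_X(1)$ and the components missing $p$ die locally. The paper's proof is just a terser version of exactly this argument.
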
 

\begin{proof}
The class group $\Cl X$ is generated by $\O_X (1)$ and the supports of the codimension 
two components of $Y$ \cite[Theorems 1.1 and 1.7]{BN}, but the images of $\O_X (1)$ and the 
codimension two components of $Z$ which miss $p$ have trivial image in $\Cl \O_{X,p}$. 
\end{proof}

We give an algebraic translation of this result. 
\begin{cor}
Let $I \subset R = \C[x_1,\dots,x_n]$ be an ideal of height $\geq 2$. 
In the primary decomposition $I = \bigcap q_i$ with $q_i$ a $p_i$-primary ideal, 
assume that $q_i \not \subset p_i^2$ for each height two prime $p_i$. 
Then for the very general $f \in I$ and $A = \C[x_1, \dots, x_n]/(f)$, 
the image of $\Cl (A) \hookrightarrow \Cl (\widehat A)$ 
is generated by the height two primes associated to $I$. 
\end{cor}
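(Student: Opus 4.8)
The plan is to read the corollary as the affine, ring-theoretic shadow of Theorem~\ref{method}, so the whole argument becomes a translation of hypotheses together with one genuine verification. First I would pass to the projective picture: view $\A^n = \Spec R$ as the standard chart of $\P^n$, set $Y = V(I)$ with projective closure $\overline Y \subset \P^n$, and let $p$ be the origin, the base point of the $\fm$-adic completion $\widehat A$. The height two associated primes $p_i$ of $I$ are exactly the minimal primes cutting out the codimension two irreducible components of $\overline Y$: a height two associated prime cannot be embedded, since $\mathrm{ht}\, I \geq 2$ forces every minimal prime to have height $\geq 2$, and a height two embedded prime would strictly contain a minimal prime of height $\geq 2$, which is impossible. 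Thus ``supports of the codimension two components of $\overline Y$'' and ``height two primes associated to $I$'' name the same classes. Finally, by Proposition~\ref{interpret} the image of $\Cl A \ra \Cl \widehat A$ is precisely $\Cl \O_{X,p}$ sitting inside $\Cl \widehat \O_{X,p}$, so it suffices to show this group is generated by those classes.

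The one real step is to check that the hypotheses on $I$ encode exactly the superficiality of $\overline Y$. The height hypothesis gives $\dim \overline Y \leq n-2$. For the embedding dimension condition, fix a height two prime $p_i$ with generic point $\eta_i$ of $V(p_i)$; localizing at $\eta_i$ kills every other primary component, so $I_{\eta_i} = (q_i)_{\eta_i}$ inside the two-dimensional regular local ring $R_{p_i}$. The hypothesis $q_i \not\subset p_i^2$ says precisely that $(q_i)_{\eta_i} \not\subset \fm_{\eta_i}^2$, i.e. $I$ has a local equation of order one at $\eta_i$, so $\overline Y$ has embedding dimension $< n$ at the generic point of each codimension two component. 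Consequently the locus of embedding dimension $n$ points meets each such component in a proper closed subset of dimension $\leq n-3$; on components of higher codimension it is automatically of dimension $\leq n-3$; and at infinity it lies in $\overline Y \cap H_\infty$, which has dimension $\leq n-3$ because no component of $\overline Y$ is contained in the hyperplane at infinity. Hence $\overline Y$ is superficial.

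With superficiality established I would invoke Theorem~\ref{method}: for $d \gg 0$ the very general degree $d$ hypersurface $X \supset \overline Y$ is normal and $\Cl \O_{X,p}$ is generated by the supports of the codimension two components of $\overline Y$, a component missing $p$ contributing the zero class. Dehomogenizing, such $X$ correspond to the very general $f$ in the degree $d$ piece of $I$, and under the dictionary of the first paragraph these generators are exactly the height two primes associated to $I$, which gives the claim. I expect the main obstacle to be the superficiality check of the second paragraph, in particular controlling the embedding dimension locus at infinity and confirming that ``very general $f \in I$'' matches the ``very general $X \supset \overline Y$'' of Theorem~\ref{method} once the degree $d \gg 0$ is fixed; by contrast the internal translation of hypotheses is routine once the generic-point computation is in place.
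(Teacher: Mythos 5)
Your overall route is the one the paper intends: in the paper this corollary carries no proof at all---it is introduced only by the sentence ``We give an algebraic translation of this result''---and your reduction supplies exactly that translation. The steps that are pure translation are correct: a height two associated prime of $I$ is necessarily minimal (every minimal prime of $I$ has height $\geq 2$, and an embedded prime strictly contains a minimal one), so these primes are exactly the codimension two components of $\overline Y$; the image of $\Cl (A) \to \Cl (\widehat A)$ agrees with the image of $\Cl (X) \to \Cl (\widehat \O_{X,p})$ because $\Cl (X) \to \Cl (\Spec A)$ is surjective with kernel generated by the hyperplane class, and by Proposition \ref{interpret} this image is $\Cl (\O_{X,p})$; and the very-generality matching you flag at the end is harmless, since dehomogenization identifies $H^0\left(\I_{\overline Y}(d)\right)$ with $\{f \in I : \deg f \leq d\}$ for every $d$, the schematic closure $\overline Y$ being cut out in degree $d$ by exactly those forms whose dehomogenizations lie in $I$.

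The genuine problem sits in the step you correctly single out as the only real one. The assertion that $q_i \not\subset p_i^2$ ``says precisely'' that $(q_i)_{\eta_i} \not\subset \fm_{\eta_i}^2$ is false in the direction you need: the contraction of $\fm_{\eta_i}^2$ to $R$ is the \emph{symbolic} square $p_i^{(2)}$, not the ordinary square, so $(q_i)_{\eta_i} \subset \fm_{\eta_i}^2$ is equivalent to $q_i \subset p_i^{(2)}$, and the inclusion $p_i^2 \subseteq p_i^{(2)}$ is strict for many singular height two primes. Concretely, take $n=3$ and let $p_1 \subset \C[x,y,z]$ be the prime of the monomial curve $(t^3,t^4,t^5)$; then $g = x^5 - 3x^2yz + xy^3 + z^3$ lies in $p_1^{(2)} \setminus p_1^2$ (it and all its first partials vanish on the curve, but its weighted degree $15$ is smaller than the minimal weighted degree $16$ of elements of $p_1^2$). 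Setting $I = q_1 = p_1^{(2)}$, the hypothesis of the corollary holds, yet every $f \in I$ vanishes to order two along $V(p_1)$, so every hypersurface containing $Y = V(I)$ is singular in codimension one and hence non-normal: $\overline Y$ is not superficial, Theorem \ref{gens} cannot be invoked, and the conclusion itself collapses since $A$ is never normal. So ``hence $\overline Y$ is superficial'' does not follow from the hypothesis as stated. To be fair, this defect is present in the printed statement, and your blind reconstruction has faithfully reproduced it; your argument becomes correct verbatim once the hypothesis is read (as the authors surely intend) as $q_i \not\subset p_i^{(2)}$, i.e., as the condition that $I$ has order one at the generic point of each codimension two component.
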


\section{Power series and completions of unique factorization domains}

In this section we note some useful lemmas about changes of coordinates in power series rings, 
the Mather-Yau theorem, and prove Theorem \ref{ufd} from the introduction. 

\subsection{Coordinate changes}

We will use coordinate changes in $k[[x,y,z]]$ to to simplify equations, enabling us to 
understand the structure of singularities and to calculate local Picard groups.  
For a formal power series ring $R = k[[x_1,\dots,x_n]]$ over a field $k$ with maximal ideal $\fm$, 
a {\it change of coordinates} for $R$ is an assignment $x_i \mapsto x_i^\prime \in \fm$ which induces an 
automorphism of $R$. We include proofs, being unaware of any references.

\begin{rmk}\label{coordchange}{\em An assignment $x_i \mapsto x_i^\prime$ induces an automorphism 
if and only if the induced map $\fm / \fm^2 \to \fm / \fm^2$ is an isomorphism if and only if the matrix 
$A$ of coefficients of linear terms in the $x_i^\prime$ is nonsingular (this is noted by Jaffe \cite[3.2]{J2} when $n=2$). 
\em}\end{rmk}

\paragraph{\em Multiplication of variables by units} The easiest sort of coordinate change, these have the form 
$x_i \mapsto x_i^\prime = u_i x_i$ where $u_i \in R$ is a unit. 

\begin{ex}\label{baby}{\em Consider the equation $f=x^2 + y^2 z - z^{n-1} \in \C [[x,y,z]]$. 
For $\displaystyle a = e^\frac{\pi i}{n-1}$ and $\displaystyle b = e^\frac{-\pi i}{2(n-1)}$, the coordinate 
change $x \mapsto x^\prime = x, z \mapsto z^\prime = az, y \mapsto y^\prime = by$ brings $f$ to the form 
${x^\prime}^2 + {y^\prime}^2 {z^\prime} + {z^\prime}^{n-1}$, the standard form for a $D_n$ singularity.  
\em}\end{ex}

\paragraph{\em Translations} These have the form $x_i \mapsto x_i^\prime = x_i + f_i$ with $f_i \in \fm$. 
In our usage it is often the case that $f_i \in \fm^2$. 

\begin{ex}\label{d3=a3}{\em
Consider the equation $f=x^2 + y^p z + z^2$ in $\C [[x,y,z]]$. Completing the square gives 
$f=x^2 + (z+(1/2)y^p)^2 - (1/4) y^{2p}$ so that after the coordinate change 
$z^\prime = z+(1/2)y^2$ we arrive at $f = x^2 + (z^\prime)^2 - (1/4) y^{2p}$. 
Making the further coordinate change $X=x+iz^\prime, Z=x-iz^\prime$ and $Y=\frac{1}{\sqrt[p]{2}} y$ brings 
$f$ to the form $XY-Y^{2p}$, the standard form of an $A_{2p-1}$ singularity. 
When $p=2$ this shows that a $D_3$ singularity is the same as an $A_3$.
\em}\end{ex}

\paragraph{\em Sequences of coordinate changes} Sometimes it is easier to achieve a coordinate change one step 
at a time, approximating solutions \`a la Hensel. Thus if we have a {\it sequence} of coordinate changes 
$x_i^j \mapsto x_i^{j+1}$ satisfying $x_i^j \equiv x_i^{j+1}$ mod $\fm^{j+1}$, then $x_i^j$ converges to $X_i \in R$ 
and $x_i \mapsto X_i$ is a change of coordinates. The following lemmas illustrate such sequences. 

\begin{lem}\label{comproots} Let $(R,\fm)$ be a complete local domain, and let $n$ be a positive 
integer that is a unit in $R$. If $a_0 \in R$ is a unit and $u \equiv a_0^n$ mod $\fm^k$ 
for some fixed $k > 0$, then there exists $a \in R$ such that $a^n = u$ and $a \equiv a_0$ mod $\fm^k$.
\end{lem}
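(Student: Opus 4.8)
The plan is to construct the $n$-th root $a$ by Newton's method, building a Cauchy sequence $a_0, a_1, a_2, \dots$ in $R$ whose $j$-th term is an approximate root accurate modulo $\fm^{k+j}$, and then passing to the limit using completeness of $R$. Concretely, I would maintain throughout the two invariants $a_j \equiv a_0 \pmod{\fm^k}$ and $u \equiv a_j^n \pmod{\fm^{k+j}}$. The hypothesis $u \equiv a_0^n \pmod{\fm^k}$ is exactly the case $j=0$, so the induction gets started for free.

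For the inductive step, I would suppose $a_j$ satisfies the invariants and set $w = u - a_j^n \in \fm^{k+j}$, then look for a correction $a_{j+1} = a_j + \delta$ with $\delta \in \fm^{k+j}$. Expanding by the binomial theorem,
$$a_{j+1}^n = a_j^n + n a_j^{n-1}\delta + \binom{n}{2} a_j^{n-2}\delta^2 + \cdots,$$
and since $\delta \in \fm^{k+j}$ with $k+j \geq 1$, every term past the linear one lies in $\fm^{2(k+j)} \subseteq \fm^{k+j+1}$. Thus modulo $\fm^{k+j+1}$ it suffices to solve $n a_j^{n-1}\delta \equiv w$. Here the hypotheses pay off: because $a_0$ is a unit and $a_j \equiv a_0 \pmod{\fm}$, the element $a_j$ is a unit, and because $n$ is a unit in $R$ the coefficient $n a_j^{n-1}$ is a unit; hence $\delta := (n a_j^{n-1})^{-1} w$ is a well-defined element of $\fm^{k+j}$ solving the equation exactly. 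With this choice $u - a_{j+1}^n \in \fm^{k+j+1}$ and $a_{j+1} \equiv a_j \pmod{\fm^{k+j}}$, so both invariants propagate.

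Finally, the increments $a_{j+1} - a_j = \delta \in \fm^{k+j}$ tend to zero, so $(a_j)$ is $\fm$-adically Cauchy and converges to some $a \in R$ by completeness; since $a - a_j \in \fm^{k}$, the congruence $a \equiv a_0 \pmod{\fm^k}$ survives in the limit. Because $u - a_j^n \in \fm^{k+j}$ for every $j$ and $a_j^n \to a^n$, the difference $u - a^n$ lies in $\bigcap_m \fm^m = 0$, forcing $a^n = u$. The computation is routine, and I expect no serious obstacle; the real content is simply the recognition that $n$ being a unit makes the Newton derivative $n a_j^{n-1}$ invertible, which is what keeps the linear correction solvable at each stage — absent this (as in extracting $p$-th roots in characteristic $p$) the method breaks down. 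The only point genuinely demanding care is checking that the quadratic and higher binomial terms are absorbed into $\fm^{k+j+1}$, which is immediate once $k+j \geq 1$.
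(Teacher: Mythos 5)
Your proof is correct and follows essentially the same route as the paper's: a Hensel/Newton iteration with correction term $\delta = (na_j^{n-1})^{-1}(u - a_j^n)$, followed by passage to the limit using completeness. The only difference is bookkeeping (you gain one order of precision per step, the paper gains $k$), which does not change the substance of the argument.
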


\begin{proof} 
Using Hensel's method, we construct a sequence $\{a_i\}$ with 
$a_i\equiv a_{i+1} \mod \fm^{(i+1)k}$ and $a_i^n\equiv u \mod \fm^{(i+1)k}$. 
Write $u^{(i)} = u - a_i^n \in \fm^{(i+1)k}$ and let 
$a_{i+1} = a_i + \frac{u^{(i)}}{na_i^{n-1}}$; then $a_{i+1}\equiv a_i\mod \m^{(i+1)k}$ 
and $a_{i+1}^n = a_i^n + u^{(i)} + \tilde{u}, \tilde{u}\in 
\fm^{2(i+1)k} \Rightarrow a_{i+1}^n \equiv u$ mod $\fm^{(i+2)k}$.
\end{proof}

\begin{lem}\label{dnen}
Let $R = k[[y,z]]$ with maximal ideal $\fm \subset R$. 
For integers $a,s,t$ with $s > a > 1, t > a+1$ and $b \in R$ a unit, 
there is a change of coordinates $Y,Z$ such that 
\[
f=y^{a} z + z^{s} - b y^{t} = Y^{a} Z + Z^{s}.
\]
Furthermore $X,Y$ may be chosen so that $y \equiv Y$ mod $\fm^2$ and $z \equiv Z$ mod $\fm^2$.
\end{lem}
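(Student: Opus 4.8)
The plan is to build the coordinates $Y,Z$ as a limit of a sequence of elementary coordinate changes, following the ``sequence of coordinate changes'' technique above: starting from $Y_0=y,\ Z_0=z$, I will successively correct the defect $E_j := f-(Y_j^{\,a}Z_j+Z_j^{\,s})$ so that its order grows without bound, and then invoke the principle that such a sequence converges to an automorphism. Since $t>a$, the offending term factors as $-by^t=y^a\cdot(-by^{\,t-a})$ with $t-a\ge 2$, so writing $f=y^a\bigl(z-by^{\,t-a}\bigr)+z^s$ already suggests the first correction $Z_1=z-by^{\,t-a}$ (keeping $Y_1=y$), which clears the perturbation at the cost of the new higher-order terms coming from $z^s=(Z_1+by^{\,t-a})^s$.

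To organize the bookkeeping I would grade $R=k[[y,z]]$ by the weights $w(y)=s-1,\ w(z)=a$, so that $g:=Y^aZ+Z^s$ is weighted-homogeneous of weighted degree $as$. The point of the hypotheses is that $by^t$ has weighted degree $t(s-1)$, and $s>a$ together with $t>a+1$ force $t(s-1)>as$; thus the initial defect $E_0=-by^t$ has weighted order $>as$, and I will maintain inductively that every $E_j$ does too. Given $E_j$ with leading (lowest weighted degree) form $E_j^{(0)}$, I look for an infinitesimal change $Y_{j+1}=Y_j+u,\ Z_{j+1}=Z_j+v$ with $u,v$ weighted-homogeneous, so that to leading order
\[
g(Y_{j+1},Z_{j+1})-g(Y_j,Z_j)=\frac{\partial g}{\partial Y}\,u+\frac{\partial g}{\partial Z}\,v \equiv E_j^{(0)}
\]
modulo higher weighted degree; cancelling $E_j^{(0)}$ then produces $E_{j+1}$ of strictly larger weighted order. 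Because $u,v$ have weighted degree exceeding $w(y)=s-1\ge a$, they carry no linear part and hence lie in $\fm^2$, which gives both the convergence of the sequence and the final congruences $y\equiv Y,\ z\equiv Z \bmod \fm^2$.

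The main obstacle is exactly the solvability of the displayed linear equation at each stage, i.e. showing that the leading form $E_j^{(0)}$ lies in the Jacobian ideal $J=\bigl(\partial g/\partial Y,\ \partial g/\partial Z\bigr)=\bigl(a\,y^{a-1}z,\ y^a+s\,z^{s-1}\bigr)$. Here the quasi-homogeneity is decisive: since $g$ has an isolated singularity the Milnor algebra $R/J$ is finite-dimensional and concentrated in weighted degrees at most $2as-2(s-1)-2a$, so once a weighted-homogeneous form has weighted degree exceeding this bound it automatically lies in $J$ and can be absorbed. I expect the delicate part of the write-up to be verifying that the hypotheses $s>a>1$ and $t>a+1$ keep every defect $E_j$ in this absorbable range (equivalently, that no leading form lands in the socle of $R/J$), and to confirm that any rescalings of the form $y\mapsto y\,(\text{unit})^{1/a}$ needed to reabsorb the cross terms are legitimate coordinate changes --- for which Lemma~\ref{comproots} supplies the required $a$-th roots of units, $a$ being invertible in $\C$.
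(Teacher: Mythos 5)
You take a genuinely different route from the paper at the crucial step, so let me first record the difference. Both arguments build $(Y,Z)$ as the limit of a sequence of filtration-preserving coordinate changes whose increments have strictly increasing order, and both open with the same move $z \mapsto z - by^{t-a}$. But the paper's absorption mechanism is elementary and explicit: it keeps the defect at every stage in the rigid shape $f = y_i^a z_i + z_i^s - b_i y_i^{k_i}$ with $b_i$ a unit, clears it by $z_{i+1} = z_i - b_i y_i^{k_i-a}$, factors the resulting cross terms of $(z_{i+1}+b_iy_i^{k_i-a})^s$ as $y_i^a z_{i+1}\cdot v_i$ with $v_i$ a unit, removes that unit by the rescaling $y_{i+1}=w_iy_i$ with $w_i^a=v_i$ supplied by Lemma~\ref{comproots}, and observes that the new exponent $k_{i+1}=s(k_i-a)$ grows. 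No Jacobian ideal, Milnor algebra, or weighted grading appears anywhere. Your scheme instead allows an arbitrary defect and solves the linearized equation $g_Yu+g_Zv \equiv E_j^{(0)}$, which requires $E_j^{(0)} \in J=(ay^{a-1}z,\; y^a+sz^{s-1})$.

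That membership is a genuine gap, and it is not merely ``delicate'': in the stated generality it cannot be closed, because the lemma itself fails for large $a$. Your inductive invariant (weighted order $>as$) forces membership in $J$ only when $as$ is at least the socle degree $2as-2(s-1)-2a$, i.e. only when $(a-2)(s-2)\leq 2$; otherwise a defect can land exactly in the socle, where the obstruction is real. Take $a=4$, $s=5$, $t=6$, which satisfies $s>a>1$ and $t>a+1$. Modulo $J=(4y^3z,\;y^4+5z^4)$ one has $y^6=y^2\cdot y^4\equiv -5y^2z^4$, and $y^2z^4$ is precisely the socle generator of the ($16$-dimensional) Milnor algebra, so the very first defect $-by^6$ does not lie in $J$ and your iteration cannot start. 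Worse, this is not an artifact of the method: any automorphism carrying $y^4z+z^5-by^6$ to $y^4z+z^5$ would have linear part preserving the quintic $y^4z+z^5$ exactly (compare degree-$5$ parts) and quadratic part solving $g_Yu+g_Zv=-b(0)\,y^6$ in degree six; but the span of $\{y^3z,\;y^4+5z^4\}$ times quadratic forms is $\langle y^5z, y^4z^2, y^3z^3, yz^5, z^6, y^6+5y^2z^4\rangle$, which does not contain $y^6$. So no proof can succeed for these parameters. In general $y^t\equiv -s\,y^{t-a}z^{s-1}$ modulo $J$, which vanishes if and only if $t\geq 2a-1$; the hypothesis $t\geq a+2$ yields this only for $a\leq 3$.

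For $a=2$ --- the only case the paper ever uses, namely its $\rdD_n$ computations --- your argument closes exactly as you predict: the socle degree is $2s-2<2s=as$, so every weighted-homogeneous form of weighted degree $>as$ lies in $J$, no socle analysis is needed, and the rest of your outline (weighted orders strictly increase, the increments $u,v$ lie in $\fm^2$, the limits exist) is correct; note also that in your additive formulation no $a$-th roots of units are needed at all. You are in good company in overreaching: the paper's own induction silently requires $k_i\geq 2a$ in order to factor the $j=1$ cross term $sb_iz_{i+1}^{s-1}y_i^{k_i-a}$ as $y_i^az_{i+1}\cdot(\cdots)$, and the hypothesis $t>a+1$ guarantees this at the base step only when $a=2$. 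The honest repair, for your proof and the paper's alike, is to restrict the statement (to $a=2$, or to $(a-2)(s-2)\leq 2$, or to assume $t\geq 2a$), not to attempt the socle-avoidance verification you deferred.
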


\begin{proof}
We produce coordinate changes $y_i, z_i$ such that 
$y_{i+1}\equiv y_i \mod \fm^{i+1}, z_{i+1}\equiv z_i \mod \fm^i$ 
and $f = y_i^a z_{i} + z_i^s -b_i y_{i}^{k_i}$ with 
$k_i\ge i+a+1$ and $b_{i}$ a unit. By hypothesis $y_1=y, z_1=z$ give the base step $i=1$.

For the induction step, let $z_{i+1} = z_i - b_i y^{k_i-a} \equiv z_i\mod \m^{i+1}$ so that
$$f=y_i^a z_{i+1} + (z_{i+1}^s + sb_iz_{i+1}^{s-1}y_i^{k_i-a} + 
\dots + sb_i^{s-1}z_{i+1}y_i^{(s-1)(k_i-a)} + y_i^{s(k_i-a)})$$
$$=y_i^a z_{i+1}\ubm{[1+sb_iz_{i+1}^{s-2}y_i^{k_i-2a} + \dots + 
sb_i^{s-1}y_i^{(s-1)(k_i-2a)}]}{v_{i}} + 
z_{i+1}^s + b_i^s y_i^{s(k_i-a)}$$
where $v_i$ is a unit with lowest-degree term after the leading $1$ is of degree 
$s-2+k_i-2a \ge i$. By Lemma~\ref{comproots}, $v_i$ has an $a^{\rm th}$ root 
$w_i$ that is congruent to $1$ mod $\fm^i$. 
Then $y_{i+1} = w_i y_i \equiv y_i \mod \fm^{i+1}$, so that 
$f = y_{i+1}^a z_{i+1} + z_{i+1}^s - b_{i+1} y_{i+1}^{k_{i+1}}$, 
where $b_{i+1} = -b_i^s w_i^{-s(k_i-a)}$ is a unit and 
$k_{i+1} = s(k_i-a) \ge s(i+1) \ge (a+1)(i+1) \ge a+i+2$, completing the induction.
\end{proof}
 
\paragraph{\em Elementary Transformations} These are a type of translation used by Hartshorne \cite[$\S 4$]{Zeuthen}. 
To simplify an expression $f=xy + g_r z^r + g_{r+1} z^{r+1} + \dots$ with $g_i \in [x,y]$ and $r \geq 1$, 
write $g_r = ax + by$ and let $x^\prime = x+bz^r, y^\prime = y+az^r$. 
Then $x^\prime y^\prime = xy + f_r z^r + ab z^{2r}$ and $f = x^\prime y^\prime + g_{r+1}^\prime z^{r+1} + \dots$ 
for suitable $g_i^\prime$, removing the lowest power of $z$. 
\begin{lem}\label{xyfactor}
Let $R = k[[x,y,z]]$ with maximal ideal $\fm \subset R$. 
For $f \in (x,y)^2 \fm$, there is a change of coordinates $X,Y$ such that 
\[
xy+f = XY.
\]
Furthermore $X,Y$ may be chosen so that $x \equiv X$ mod $\fm^2$ and $y \equiv Y$ mod $\fm^2$. 
\end{lem}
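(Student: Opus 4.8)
The plan is to mimic the sequence-of-coordinate-changes technique already developed in the proofs of Lemmas \ref{comproots} and \ref{dnen}, using the elementary transformations described immediately above the statement as the basic step. Given $f \in (x,y)^2 \fm$, I first expand $f$ as a power series organized by powers of $z$, writing $xy + f = xy + \sum_{i \geq r} g_i z^i$ where each $g_i \in k[[x,y]]$ and $r \geq 1$ is the smallest exponent of $z$ appearing. The hypothesis $f \in (x,y)^2\fm$ guarantees that every $g_i$ lies in $(x,y)^2$, i.e.\ has no constant, pure-$x$, or pure-$y$ linear part; this is exactly what lets the elementary transformation proceed without disturbing the leading $xy$ term.

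The core of the argument is an induction producing changes of coordinates $x_j, y_j$ with $x_{j+1} \equiv x_j$ and $y_{j+1} \equiv y_j$ mod $\fm^{j+1}$, such that after the $j$-th step one has $xy + f = x_j y_j + (\text{terms of $z$-order} \geq j)$, with the remaining correction still lying in $(x_j, y_j)^2 \fm$. First I would verify the base step $x_1 = x, y_1 = y$ directly from the hypothesis. For the induction step, I write the lowest surviving coefficient as $g_j = a x_j + b y_j$ with $a, b \in \fm$ (possible because $g_j \in (x_j,y_j)^2 \subset (x_j,y_j)\fm$, so the linear coefficients $a,b$ are themselves in $\fm$), and set $x_{j+1} = x_j + b z^j,\ y_{j+1} = y_j + a z^j$ following the elementary transformation recipe. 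Then
\[
x_{j+1} y_{j+1} = x_j y_j + g_j z^j + ab z^{2j},
\]
so the lowest power $z^j$ is absorbed into $x_{j+1}y_{j+1}$, and the newly introduced $ab z^{2j}$ term has $z$-order $2j > j$ and again lies in $(x_{j+1}, y_{j+1})^2\fm$ since $a,b \in \fm$. Because $a,b \in \fm$ we get $x_{j+1} \equiv x_j$ and $y_{j+1} \equiv y_j$ mod $\fm^{j+1}$, so the sequences converge to $X, Y \in R$ with $xy + f = XY$, and tracking the congruences through the limit gives $x \equiv X$ and $y \equiv Y$ mod $\fm^2$.

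The main obstacle I anticipate is not the convergence itself — that follows from the Cauchy-style criterion for sequences of coordinate changes stated in the paragraph preceding Lemma \ref{comproots} — but rather the bookkeeping showing that the residual correction term remains in $(x_j,y_j)^2\fm$ at every stage, so that the factorization $g_j = ax_j + by_j$ with $a,b\in\fm$ is legitimate at each step. The delicate point is that the substitution $x_j \mapsto x_{j+1}, y_j \mapsto y_{j+1}$ also rewrites the higher coefficients $g_i$ for $i > j$, and I must confirm that this rewriting neither lowers the $z$-order below $j+1$ nor destroys membership in the square of the (new) ideal; verifying this invariant is where the real care lies, though it is a routine if slightly tedious induction once the invariant is correctly formulated.
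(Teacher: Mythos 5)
Your approach---absorbing powers of $z$ one at a time via the elementary transformations of Hartshorne---is genuinely different from the paper's proof, but it breaks down at the very first step: the hypothesis $f \in (x,y)^2\fm$ does \emph{not} imply that $f$ has positive order in $z$. Since $(x,y)^2\fm \supseteq (x,y)^3$, the element $f = x^3$ satisfies the hypothesis and has a nonzero $z^0$ coefficient, so your claim that the smallest exponent $r$ of $z$ satisfies $r \geq 1$ is false, and the base case of your induction ($xy+f = x_1y_1 + \text{terms of $z$-order} \geq 1$) already fails. The failure is structural, not cosmetic: every move available to your induction replaces $x_j, y_j$ by $x_j + bz^j,\ y_j + az^j$ with $j \geq 1$ and creates only terms of $z$-order $\geq 2j$, so a $z$-free component of $f$ is never touched and can never be absorbed. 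This is exactly the case that matters in the paper: in the one place Lemma~\ref{xyfactor} is applied (the proof of Proposition~\ref{step1}), the input is $f = -g_1h_1 \in (x,y)^{2m-4}$, which is entirely $z$-free, so your procedure would do nothing at all there. The paper instead inducts on total $\fm$-adic order: since $(x,y)^2\fm = x\cdot(x,y)\fm + y\cdot(x,y)\fm$, write $f = xh_1 + yg_1$ with $g_1,h_1 \in (x,y)\fm \subseteq \fm^2$, set $x_1 = x+g_1$, $y_1 = y+h_1$, so that $xy+f = x_1y_1 - g_1h_1$ with the new correction $g_1h_1 \in (x,y)^2\fm^2$; iterating increases the order at each stage, the coordinate changes converge by the Cauchy-style criterion, and $z$ plays no role whatsoever. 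Patching your argument would require running precisely this total-order iteration as a preliminary step just to kill the $z^0$ part, at which point the $z$-order bookkeeping is superfluous.

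A secondary flaw, independent of the above: in your inductive step you justify $abz^{2j} \in (x_{j+1},y_{j+1})^2\fm$ ``since $a,b \in \fm$.'' That implication is a non sequitur, as $\fm^2 z^{2j}$ is not contained in $(x_{j+1},y_{j+1})^2\fm$ (a pure power of $z$ is not even in the ideal $(x_{j+1},y_{j+1})$). What actually saves the step is stronger and needs to be said: because $g_j \in (x_j,y_j)^2$ and $x_j,y_j,z$ form a regular system of parameters, \emph{any} expression $g_j = ax_j + by_j$ automatically has $a,b \in (x_j,y_j)$ (reducing modulo $(x_j,y_j)$, the residues $\bar a, \bar b \in k[[z]]$ satisfy $\bar a x_j + \bar b y_j \in (x_j,y_j)^2$, which forces $\bar a = \bar b = 0$); then $ab \in (x_j,y_j)^2$, and $(x_j,y_j) = (x_{j+1},y_{j+1})$ as ideals by Nakayama's lemma, so the invariant persists. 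This repairs the bookkeeping you flagged as ``where the real care lies,'' but only for the part of $f$ of positive $z$-order; the gap described above remains and is fatal to the proposal as written.
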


\begin{proof} ({\em Cf.} \cite[I, Ex. 5.6.3]{AG})
Since $f \in \fm^3$, we may write $f = xh_1 + y g_1$ with $h_1, g_1 \in \fm^2$. Now write $x_1 = x+g_1, y_1=y+h_1$ 
so that $xy+f=x_1 y_1 - g_1 h_1$. Since $g_1 h_1 \in \fm^4$, we can continue the process, construting a sequence of 
coordinate changes, which converge is $X,Y$.
\end{proof}

%
%

\begin{lem}\label{Ruiz} Let $\fm \subset \C [[x_1, \dots, x_n]]$ denote the maximal ideal and 
fix $f \in \fm^2$. Then for $g \in \C[[x_1, \dots, x_n]]$, 
$f-g \in \fm \cdot (f_{x_1}, \dots, f_{x_n})^2 \Rightarrow \C[[x_1, \dots x_n]]/(f) \cong \C[[x_1, \dots, x_n]]/(g)$.
\end{lem}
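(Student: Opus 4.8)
The plan is to exhibit the isomorphism as a single coordinate change of $R=\C[[x_1,\dots,x_n]]$ carrying $f$ to $g$, constructed as a convergent infinite composite of ``small'' translations via a Newton--Hensel style successive approximation. This is the contact-equivalence (finite determinacy) circle of ideas, and one could instead run the Thom--Levine homotopy method along $f_t = f + t(g-f)$; but because the hypothesis places $g-f$ in the \emph{square} of the Jacobian ideal, the direct iteration is cleaner and stays entirely within the formal power series framework already developed above (cf.\ the discussion of sequences of coordinate changes).

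First I would record the one algebraic fact that drives everything. Writing $J=(f_{x_1},\dots,f_{x_n})$ for the Jacobian ideal and noting $J \subset \fm$ since $f \in \fm^2$, one has $\fm J^2 = \sum_i (\fm J) f_{x_i}$: every element of $\fm J^2$ factors through the partials with coefficients lying back in $\fm J$. Thus the hypothesis $f-g \in \fm J^2$ lets me write $g-f = \sum_i b_i f_{x_i}$ with each $b_i \in \fm J \subset \fm^2$.

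Next I would take the translation $\tau\colon x_i \mapsto x_i + b_i$. Since $b_i \in \fm^2$, the map $\tau$ is the identity on $\fm/\fm^2$, hence an admissible coordinate change by Remark~\ref{coordchange}. Taylor expansion gives
\[
\tau(f) = f + \sum_i b_i f_{x_i} + E = g + E, \qquad E = \sum_{k \ge 2} \frac{1}{k!} \sum b_{i_1}\cdots b_{i_k}\, f_{x_{i_1}\cdots x_{i_k}}.
\]
Every term of $E$ carries at least two factors $b_i \in \fm J$, so $E \in \fm^2 J^2 \subset \fm J^2$, of strictly higher $\fm$-adic order than $g-f$; moreover the perturbation lies in $\fm J$, so the Jacobian ideal $J_{\tau(f)} = J$ is unchanged. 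Hence $\tau$ replaces the discrepancy $g-f \in \fm J^2$ by $E \in \fm^2 J^2$, and the same recipe applies to $\tau(f)$ in place of $f$. Iterating produces translations $\tau_1, \tau_2, \dots$ with $\tau_k \equiv \mathrm{id}$ mod $\fm^{k+1}$ and $(\tau_k \circ \cdots \circ \tau_1)(f) \equiv g$ mod $\fm^{N_k}$ with $N_k \to \infty$. By the convergence of sequences of coordinate changes, the composite converges to an automorphism $\Phi$ of $R$ with $\Phi(f)=g$, and $\Phi$ descends to the desired isomorphism $\C[[x_1,\dots,x_n]]/(f) \cong \C[[x_1,\dots,x_n]]/(g)$.

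The main obstacle is closing the induction: one must check that the Taylor remainder at each stage again lies in $\fm J^2$ (so the factorization through the partials can be repeated) while its $\fm$-adic order strictly increases (so the translations converge). Both follow from the single observation that the corrections $b_i$ live in $\fm J$ rather than merely in $\fm$ --- this is precisely where the square in the hypothesis $\fm J^2$ is used, since it forces every higher Taylor term to absorb an extra factor of $\fm J$ and hence to land in $\fm^2 J^2$. One must also keep track that $J$ is preserved at every stage, so that the error can be refactored through the current partials; this holds because each perturbation stays in $\fm J$. Convergence of the infinite composite is then guaranteed by the $\fm$-adic completeness of $R$.
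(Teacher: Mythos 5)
Your proof is correct, and it is necessarily a different route from the paper's, because the paper does not prove Lemma~\ref{Ruiz} at all: it is stated as a known criterion and attributed to Ruiz~\cite[V, Lemma 2.2]{ruiz} (it is in essence Tougeron's finite-determinacy lemma). What you give is the standard successive-approximation proof, and it fits naturally into the paper's own toolkit: your iteration follows exactly the ``sequences of coordinate changes'' pattern the authors set up for Lemmas~\ref{comproots}, \ref{dnen} and~\ref{xyfactor}, and the convergence of your composite $\Phi$ is precisely the convergence principle stated there. The key computations are sound: $\fm J^2=\sum_i(\fm J)f_{x_i}$ produces $b_i\in\fm J$; translation by the $b_i$ is admissible by Remark~\ref{coordchange}; the Taylor remainder lies in $(\fm J)^2=\fm^2J^2$, which is exactly where the square in the hypothesis is used; and the $\fm$-adic orders strictly increase, so the composite converges to an automorphism carrying $f$ to $g$, giving the isomorphism of quotients. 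What your self-contained argument buys is independence from the external reference, in the formal category where the paper actually applies the lemma (Theorem~\ref{ufd}); what the citation buys the authors, besides brevity, is the convergent-series version over $\C\{x_1,\dots,x_n\}$ invoked in the Remark following the Mather--Yau theorem, which your purely formal iteration does not immediately yield, since the limit automorphism has no reason to be convergent.

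One step should be spelled out rather than asserted: $J_{\tau(f)}=J$. The clean argument has two parts. First, by the chain rule the partials of $\tau(f)$ are obtained from $\tau(f_{x_1}),\dots,\tau(f_{x_n})$ via the matrix $(\delta_{ij}+\partial_j b_i)$, which is invertible over $R$ since $\partial_j b_i\in\fm$; hence $J_{\tau(f)}=\tau(J)$. Second, $\tau(u)-u\in(b_1,\dots,b_n)\subseteq\fm J$ for every $u\in J$, so $\tau(J)\subseteq J$ and $J\subseteq\tau(J)+\fm J$, and Nakayama gives $\tau(J)=J$. Note that your phrase ``the perturbation lies in $\fm J$'' must be read as a statement about the translation vector $(b_i)$, as in this argument, and not about the function perturbation $\tau(f)-f$: perturbing $f$ by a general element of $\fm J$ can change the Jacobian ideal (for $f=x^3+y^3$ and $h=x^2y\in\fm J$ one has $3x^2+2xy\notin(x^2,y^2)$), although in your situation $\tau(f)-f$ in fact lies in $\fm J^2$, whose partials land in $J^2+\fm J\subseteq\fm J$, which also suffices for the Nakayama argument. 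With this point made explicit, the induction closes and the proof is complete.
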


We close with the Mather-Yau theorem from complex analysis \cite{MY}. Recall that for a 
polynomial $f \in \C[x_1, \dots, x_n]$ vanishing at the origin, the {\em moduli algebra} of $f$ 
is 
\[
A(f) = \C\{x_1, \dots, x_n\}/(f,\partial f / \partial x_1, \dots \partial f / \partial x_n)
\]
where $\C\{x_1, \dots, x_n\}$ is the local ring of germs of holomorphic functions at the origin. 

\begin{thm} Isolated singularities at the origin defined by $f, g \in \C[x_1,\dots,x_n]$ 
are complex-analytically isomorphic if and only if $A(f) \cong A(g)$. 
\end{thm}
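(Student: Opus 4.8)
The plan is to reformulate the statement in terms of contact equivalence and then treat the two implications separately, the forward direction (isomorphic singularities $\Rightarrow$ isomorphic moduli algebras) being routine and the converse carrying the weight. Recall that the germs $(V(f),0)$ and $(V(g),0)$ are complex-analytically isomorphic precisely when $f$ and $g$ are \emph{contact equivalent}: there is a biholomorphic germ $\phi$ of $(\C^n,0)$ and a unit $u \in \C\{x_1,\dots,x_n\}$ with $g = u\cdot(f\circ\phi)$. Write $J(f)=(\partial f/\partial x_1,\dots,\partial f/\partial x_n)$ for the Jacobian ideal and $T(f)=(f)+J(f)$ for the Tjurina ideal, so that $A(f)=\C\{x\}/T(f)$; the singularity is isolated exactly when $\dim_\C A(f)<\infty$.

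For the easy direction I would start from $g=u\,(f\circ\phi)$ and compute the Tjurina ideal of $g$. Differentiating and reducing modulo $(g)=(f\circ\phi)$ kills the term $(\partial u/\partial x_i)(f\circ\phi)$, leaving $\partial g/\partial x_i \equiv u\sum_j(\partial f/\partial x_j\circ\phi)\,\partial\phi_j/\partial x_i$ by the chain rule. Since $u$ is a unit and the Jacobian matrix $(\partial\phi_j/\partial x_i)$ is invertible, the generator sets $\{g,\partial g/\partial x_i\}$ and $\{f\circ\phi,(\partial f/\partial x_i)\circ\phi\}$ span the same ideal, i.e. $T(g)=\phi^*\,T(f)$, so $\phi^*$ descends to an isomorphism $A(f)\to A(g)$.

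For the converse I would proceed in two steps. First, normalize: by the splitting lemma I may assume $f,g\in\fm^3$ (the nondegenerate quadratic part is recorded by, and recovered from, $A(f)$ via its corank), so that $T(f),T(g)\subseteq\fm^2$ and $A(f),A(g)$ have embedding dimension $n$. A $\C$-algebra isomorphism $A(f)\to A(g)$ then induces an isomorphism of cotangent spaces $\fm/\fm^2$, and lifting a basis yields an automorphism $\psi$ of $\C\{x\}$ with $\psi(T(f))=T(g)$; replacing $f$ by $f\circ\psi^{-1}$, which is contact equivalent to $f$ by the easy direction, I reduce to the case $T(f)=T(g)=:I$. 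Second, I would connect $f$ to $g$ by the line $f_t=f+t(g-f)$, $t\in\C$, and run a Thom--Levine/Mather homotopy argument: since $f,g$ and all their partials lie in $I$ one has $T(f_t)\subseteq I$, whence $\tau(f_t):=\dim_\C A(f_t)\ge\dim_\C\C\{x\}/I$ with equality at $t=0,1$; by upper semicontinuity of the Tjurina number the jump locus $\{t:\tau(f_t)>\dim\C\{x\}/I\}$ is a proper closed, hence finite, subset $F\subset\C$ missing $0$ and $1$. Choosing a path in the connected set $\C\setminus F$ from $0$ to $1$, the Tjurina ideal $T(f_t)=I$ stays constant along it, so $\dot f_t=g-f\in I=T(f_t)$, and Mather's lemma integrates the associated infinitesimal contact transformations into a genuine contact equivalence between $f_0=f$ and $f_1=g$.

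The main obstacle is this last integration step. Solving the homological equation requires expressing $g-f$ not merely as an element of the Tjurina ideal $T(f_t)=(f_t)+J(f_t)$ but within the true tangent space $(f_t)+\fm\,J(f_t)$ to the contact orbit, and then integrating the resulting time-dependent vector field and unit factor into a convergent one-parameter family of biholomorphisms; reconciling the full Jacobian ideal that defines $A(f)$ with the $\fm$-multiplied ideal that governs the deformation, together with the finite-determinacy estimates that make the flow converge, is the delicate heart of the argument and the reason the theorem lies as deep as it does.
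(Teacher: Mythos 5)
First, a point of comparison: the paper does not prove this statement at all. It is the Mather--Yau theorem, quoted verbatim from \cite{MY}, and the paper explicitly uses only the hard direction ($A(f) \cong A(g)$ implies analytic isomorphism) as a black box. So your attempt is being measured against the literature rather than against an argument in the paper. Within your sketch, the forward direction is correct, and the reduction steps of the converse are sound and do match the standard strategy: the splitting-lemma normalization, the lifting of a $\C$-algebra isomorphism $A(f) \to A(g)$ to an automorphism $\psi$ of $\C\{x_1,\dots,x_n\}$ with $\psi(T(f)) = T(g)$ (where $T(f) = (f) + J(f)$ and $J(f)$ is the Jacobian ideal), the resulting reduction to $T(f) = T(g) =: I$, and the semicontinuity argument making $T(f_t) = I$ off a finite set of $t$.

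However, the step you yourself flag as ``the main obstacle'' is not a delicate technicality to be finessed by better convergence estimates; it is the entire content of the theorem, and the straight-line argument provably cannot supply it. Whenever $T(f_t) = I$ one has $(f_t) + \fm J(f_t) = \C f_t + \fm I$, so the tangent-space condition you need, namely $g - f \in (f_t) + \fm J(f_t)$, says precisely that $[g] - [f]$ is proportional to $(1-t)[f] + t[g]$ in the finite-dimensional vector space $I/\fm I$. If $[f]$ and $[g]$ are linearly independent there, this fails for \emph{every} $t$: writing $[g]-[f] = \lambda\bigl((1-t)[f] + t[g]\bigr)$ forces $\lambda(1-t) = -1$ and $\lambda t = 1$, hence $\lambda = 0$, a contradiction. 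Thus no amount of care with the homological equation or finite determinacy rescues the path you chose; what is missing is exactly the lemma that the lines $\C[f]$ and $\C[g]$ in $I/\fm I$ can be aligned by an automorphism of $\C\{x_1,\dots,x_n\}$ preserving $I$, equivalently that $(f)+\fm J(f)$ and $(g)+\fm J(g)$ are conjugate. The known proofs confront this head on: one route proves first the variant of the theorem for the algebras $\C\{x\}/\bigl((f) + \fm J(f)\bigr)$ --- there the lifted automorphism identifies the true orbit tangent spaces, your path argument closes up verbatim, and the integrated vector fields lie in $\fm$ so their flows fix the origin --- and then argues separately that for isolated singularities the Tjurina algebra determines this smaller algebra; another route (closer to the original paper \cite{MY}) replaces the path by a connectedness/open-orbit argument for the group of automorphisms preserving $I$ together with unit multiples, acting on a jet space, noting that any vector field preserving $I$ preserves $\sqrt{I} = \fm$ and hence vanishes at the origin. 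As written, your proposal is an accurate reduction of the theorem to its hardest lemma, followed by an acknowledgment that the lemma is unproved; that is a genuine gap.
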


\begin{rmk}{\em 
Lemma~\ref{Ruiz} is also true with $\C\{x_1, \dots, x_n\}$ in place of $\C[[x_1, \dots, x_n]]$. 
We wish to consider the formal-analytic situation, but we only use the direction of the Mather-Yau 
Theorem that allows us to conclude complex-analytic isomorphism of two rings, which then 
implies formal-analytic isomorphism.} 
\end{rmk}

\subsection{Applications to local Picard groups} 
Given a complete local ring $A$, what is the nicest ring $R$ having completion $A$? 
Heitmann shows that $A$ is the completion of a UFD if and only if $A$ is a field, $A$ is a DVR, or $A$ 
has depth $\geq 2$ and no integer is a zero-divisor of $A$ \cite{heit1}, however his constructions 
need not lead to excellent rings. Loepp shows with minimal hypothesis that $A$ is the 
completion of an {\em excellent} local ring \cite{loepp}, though his construction need not produce a UFD. 
Using geometric methods, Parameswaran and Srinivas show that the completion of the local ring at an isolated 
local complete intersection singularity is the completion of a UFD which is the local ring for a variety \cite{parasrini}. 
We prove the same for normal hypersurface singularities over $\C$ which need not be isolated.  

\begin{thm} Let $A=\C[[x_1, \dots x_n]]/f$, where $f$ is a polynomial defining a 
variety $V$ which is normal at the origin. 
Then there exists an algebraic hypersurface $X \subset \P^n_\C$ 
and a point $p \in X$ such that $R=\O_{X,p}$ is a UFD and $\widehat R \cong A$. 
\end{thm}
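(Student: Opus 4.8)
The plan is to produce the hypersurface $X$ by applying Theorem~\ref{method} to a carefully chosen base locus $Y$ whose geometry forces the local ring at a point $p$ to be a UFD, while simultaneously arranging that the completion $\widehat{\O}_{X,p}$ is isomorphic to the given $A = \C[[x_1,\dots,x_n]]/f$. The key idea is that Theorem~\ref{method} tells us $\Cl \O_{X,p}$ is generated by the supports of the codimension-two components of $Y$; thus if $Y$ has \emph{no} codimension-two components through $p$ (for instance if $\dim Y < n-2$, or more simply if we take $Y$ to be low-dimensional), then $\Cl \O_{X,p} = 0$ and $\O_{X,p}$ is a UFD. So the game is to choose a base locus that is superficial, contains $p$, and cuts out a hypersurface whose local equation at $p$ agrees with $f$ up to the coordinate-change ambiguity controlled by Lemma~\ref{Ruiz}.

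First I would set $p$ to be the origin in an affine chart $\A^n \subset \P^n$ and consider the hypersurface $V = \{f = 0\}$, which is normal at $p$ by hypothesis. The defining polynomial $f \in \C[x_1,\dots,x_n]$ already gives a projective hypersurface $\overline V \subset \P^n$, but this need not be normal away from $p$, nor need its local ring at $p$ be a UFD. The strategy is therefore to perturb $f$ rather than use it directly: I would take the base locus $Y$ to be (the projective closure of) a scheme supported at $p$ and of dimension $< n-2$, chosen so that a general $f' \in H^0(\I_Y(d))$ agrees with $f$ to sufficiently high order at $p$. The crucial leverage is Lemma~\ref{Ruiz}: if $f - f' \in \fm\cdot(f_{x_1},\dots,f_{x_n})^2$, then $\C[[x]]/(f) \cong \C[[x]]/(f')$, so the completion is preserved. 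Since $f$ defines a variety normal at the origin, the singular locus of $V$ at $p$ has codimension $\geq 2$ in $V$, i.e. codimension $\geq 3$ in $\A^n$, which is exactly what makes the gradient ideal $(f_{x_1},\dots,f_{x_n})$ large enough that the condition $f - f' \in \fm\cdot(f_{x_1},\dots,f_{x_n})^2$ is cut out by finitely many jet conditions — a finite-codimension linear condition on the degree-$d$ coefficients.

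Next I would verify that these two requirements are compatible for $d \gg 0$. On one hand I need $f'$ to lie in $\I_Y(d)$ for a superficial $Y$ with $\dim Y < n-2$ through $p$; on the other I need $f'$ to match $f$ modulo the Ruiz ideal $\fm\cdot(f_{x_1},\dots,f_{x_n})^2$. Both are linear conditions on the space of degree-$d$ forms, and the Ruiz condition is of finite codimension (bounded independently of $d$), so for $d$ large the intersection is nonempty and in fact cuts out a nonempty open subset where Theorem~\ref{method}'s ``very general'' hypothesis can be met. I would take $X = \{f' = 0\}$ for such a very general $f' \in \I_Y(d)$. By Theorem~\ref{method}, $X$ is normal and $\Cl \O_{X,p}$ is generated by supports of codimension-two components of $Y$; since $\dim Y < n-2$ these are absent, giving $\Cl \O_{X,p} = 0$, so $R = \O_{X,p}$ is a UFD. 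By Lemma~\ref{Ruiz}, $\widehat R = \C[[x]]/(f') \cong \C[[x]]/(f) = A$, completing the proof.

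The main obstacle I anticipate is ensuring that the Ruiz-type matching condition can genuinely be realized inside the ideal $\I_Y(d)$ while keeping $Y$ superficial and low-dimensional, since $f$ itself may be singular along a positive-dimensional locus through $p$ (the problem only assumes normality, not isolated singularity). Normality at $p$ guarantees $\codim_V \mathrm{Sing}(V) \geq 2$, which is what keeps the completion $A$ itself normal and keeps the gradient ideal from being too small, but I must check that this suffices for the Ruiz ideal $\fm\cdot(f_{x_1},\dots,f_{x_n})^2$ to have finite colength in the \emph{relevant sense} — i.e. that matching $f$ modulo this ideal is a finite set of conditions on the jets of $f'$ at $p$, so that it does not conflict with the single degree-$d$ choice and with genericity. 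The delicate point is reconciling the genuinely formal/power-series statement of Lemma~\ref{Ruiz} with the algebraic polynomial $f'$ produced by the Noether--Lefschetz machinery, and confirming that $Y$ may be chosen superficial (so that $X$ is normal along all of $p$'s neighborhood, not merely at $p$) — this is where I would invest the most care.
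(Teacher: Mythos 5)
Your overall architecture is the same as the paper's: apply Theorem~\ref{gens} to a base locus of codimension $\geq 3$ through $p$ to force $\Cl \O_{X,p}=0$, and use Lemma~\ref{Ruiz} to keep the completion equal to $A$. But your construction of the base locus has a genuine gap, and it is exactly the point the paper's choice of $Y$ is designed to handle. You take $Y$ supported at the single point $p$ and claim that the Ruiz condition $f-f' \in \fm\cdot(f_{x_1},\dots,f_{x_n})^2$ ``is cut out by finitely many jet conditions'' because the singular locus has codimension $\geq 3$ in $\A^n$. That implication is false. The ideal $\fm J_f^2$, with $J_f=(f_{x_1},\dots,f_{x_n})$, has finite colength in $\C[[x_1,\dots,x_n]]$ precisely when $J_f$ is $\fm$-primary, i.e.\ when the singularity is \emph{isolated}. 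The theorem assumes only normality, so for $n\geq 4$ the singular locus may be positive-dimensional (e.g.\ a curve of codimension $3$ in $\A^4$); then $\fm J_f^2$ has infinite colength, no finite-order jet agreement at $p$ implies membership in it, and a zero-dimensional (fat point) base locus at $p$ cannot encode the Ruiz condition. You flag this worry yourself in your final paragraph, but the resolution you propose --- codimension $\geq 3$ forces finite colength --- is precisely what fails; as written your argument proves the theorem only for isolated singularities, which was already known (Parameswaran--Srinivas) and is what the paper's subsequent Example does via Mather--Yau.

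The paper's fix is to build the Ruiz condition into the ideal of $Y$ rather than into jets at $p$: it takes $I_Y=(f, f_{x_1}^3,\dots,f_{x_n}^3)$, a scheme supported on the \emph{entire} singular locus $D$, which has codimension $\geq 3$ by normality, hence is superficial with no codimension-two components. Then the local equation of \emph{every} hypersurface containing $Y$ is, after multiplication by a unit, of the form $g=f+\sum a_i f_{x_i}^3$, so $f-g \in J_f^3 \subseteq \fm J_f^2$ automatically and Lemma~\ref{Ruiz} applies with no finiteness assumption whatsoever. This also dissolves your secondary worry about reconciling the affine-linear matching condition with the ``very general'' hypothesis: since the matching holds for every member of $|H^0(\I_Y(d))|$, not just for a proper linear subspace of it (where Theorem~\ref{gens}, stated for the very general member of the full system, would give you nothing), there is no intersection-of-conditions argument to make. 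If you replace your fat-point $Y$ with the paper's $I_Y=(f,f_{x_1}^3,\dots,f_{x_n}^3)$, the rest of your outline goes through verbatim.
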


\begin{proof} 
The hypersurface $F: \{f=0\}$ has singular locus $D$ given by the ideal $(f) + J_f$, 
where $J_f = (f_{x_1}, \dots, f_{x_n})$ is the ideal generated by the partial derivatives of $f$.
The closed subscheme $Y$ defined by the ideal $I_Y=(f, f_{x_1}^3, \dots, f_{x_n}^3)$ is supported on $D$ 
because $I_Y \subseteq I+J_f \subseteq \sqrt{I_Y}$, hence $\codim Y = \codim D \geq 3$ by normality of $F$. 
The very general hypersurface $X$ containing $Y$ satisfies $\Cl \O_{X,p} = 0$ by Theorem \ref{gens}, 
so $\O_{X,p}$ is a UFD. The local equation of $X$ is $g = f + a_1f_{x_1}^3 + \dots + a_n f_{x_n}^3$ for units $a_i$ and $f-g\in J_f^3\subseteq \fm J_f^2$, so 
$\widehat \O_{X,p} = \C[[x_1, \dots, x_n]]/(g) \cong \C[[x_1, \dots, x_n]]/(f) = A$ by Lemma \ref{Ruiz}.
\end{proof}

\begin{ex}{\em For an {\em isolated} singularity, one can give the following proof based on the Mather-Yau theorem. 
The ideal $I=(f, f_{x_1}, \dots, f_{x_n})$ generated by $f$ and its partial derivatives defines a $0$-dimensional 
scheme $Y$ supported at the origin, hence $(x_1, \dots, x_n)^N \subset I$ for some $N > 0$. The scheme $Z$ defined by 
$(f, x_1^{N+2}, \dots, x_n^{N+2})$ is also supported at the origin $p$, so by Theorem \ref{gens} the very general 
surface $S$ containing $Z$ satisfies $\Cl \O_{S,p} = 0$. The local equation of $S$ has the form 
$g = f+\sum a_i x_i^{N+2}$ for units $a_i$ in the local ring $\O_{\P^n,p} \cong \C[x_1, \dots, x_n]_(x_1, \dots, x_n)$. 
Observe that $J = (g,g_{x_1}, \dots, g_{x_n}) \subset I$ because $g-f$ and its partials lie in 
$(x_1, \dots x_n)^{N+1} \subset I$. These ideals are equal because the induced map $J \to I/(x_1, \dots x_n)I$ 
is obviously surjective, therefore so is the map $J \to I$ by Nakayama's lemma. It follows that $I = J$ in the 
ring $\C{x_1, \dots, x_n}$ of germs of holomorphic functions as well, so $f$ and $g$ define 
(complex)-analytically isomorphic singularities; this isomorphism lifts to a formal-analytic isomorphism.
We have thus produced a UFD, namely $\O_{S,p}$, whose completion is isomorphic to $A$.
\em}\end{ex}

\section{Local Picard groups of Rational Double Points}

As noted in the introduction, Mohan Kumar showed that for $\rdA_n$ and $\rdE_n$ double points 
on a rational surface, the analytic isomorphism class determines the algebraic isomorphism 
class of the local ring with the three exceptions $\rdA_7, \rdA_8, \rdE_8$, for which there are 
two possibilities each. Regarding the question of Srinivas mentioned in the introduction, this means that there 
is one (or two) possibilities for the inclusion $\Cl A \hookrightarrow \Cl \widehat A$ for the 
corresponding local rings. Our main goal in this section is to prove Theorem \ref{applic2} 
from the introduction, which says that without the rationality hypothesis, {\em all} subgroups are possible. 
For these singularities the group $\Cl \widehat A$ has been known for a long time \cite{artin,lipman}, 
they are as follows: $\Z/(n+1)\Z$ for an $\rdA_n$; $\Z/4\Z$ for a $\rdD_n$ with $n$ even; $\Z/2\Z \oplus \Z/2\Z$ 
for a $\rdD_n$ with $n$ odd; $\Z/2\Z$ for an $\rdE_6$; $\Z/3\Z$ for an $\rdE_7$; and $0$ for an $\rdE_8$. 
For each subgroup of these we will construct a surface $S \subset \Pthree$ and a point $p \in S$ for 
which $\Cl \O_{S,p}$ realizes the subgroup given. We begin with some generalities on rational double points.

\subsection{Rational double points} In this section we review rational double points, including their 
minimal desingularizations, the class groups of their completed local rings and how to compute curve classes. 

\subsubsection{$\rdA_n$ Singularities} These have the form $\Spec(R)$ with $R = k [[x,y,z]]/(xy-z^{n+1})$ and 
have a well-known minimal resolution \cite[5.2]{Zeuthen}: The $\rdA_1$ resolves in a single blow-up with one 
rational exceptional curve having self-intersection $-2$; the $\rdA_2$ resolves in one blow-up but with two 
$(-2)$-curves meeting at a point; For $n \ge 3$, blowing up with new variables $x_1 = x/z, y_1 = y/z$ gives 
two exceptional curves $E_{1}$ defined by $(x_1,z)$ and $E_{n}$ defined by $(y_1,z)$, meeting transversely 
at an $\rdA_{n-2}$ at the origin. Blowing up and continuing inductively, the singularity unfolds with 
exceptional divisor a chain of rational $(-2)$-curves $E_i$ with $E_i$ meeting $E_{i+1}$ transversely 
for $1 \leq i < n$, giving the corresponding Dynkin diagram
$$\entrymodifiers={!! <0pt, .6ex>+}\xymatrix{
\underset{E_1}\circ\ar@{-}[r] & \underset{E_2}\circ\ar@{-}[r] & \dots & \underset{E_n}{\circ}\ar@{-}[l] }
$$ 
To compute the class of a curve $C$ in the formal Picard group, form the strict transform 
$\widetilde C$ in the resolution described above and set 
$(a_1, \dots a_n) = (\widetilde C . E_1, \dots \widetilde C . E_n)$; then the class of $C$ is 
$\sum i a_i \in \Z / (n+1) \Z \cong \APic (\Spec R)$. 

\begin{ex}\label{image}{\em We compute the classes of some curves in $\APic (\Spec R) \cong \Z / (n+1) \Z$. 

(a) The curve $C$ with ideal $(x,z)$ has class $1$ because $\widetilde C$ meets only $E_1$ transversely. 

(b) The class of $D$ given by ideal $(y,z)$ has class $-1$ because it meets only $E_n$.

(c) For $1 \leq k \leq n$ and unit $a$, the curve with ideal $(x-a z^{n-k+1}, y-a^{-1} z^k)$ has 
class $k$ \cite[Remark 5.2.1]{Zeuthen}. Adjusting the variables by units, we will often apply this 
to the ideal $(x-uz^{n-k+1},y-vz^k)$ in the ring $k [[x,y,z]]/(xy-uvz^{n+1})$ with $u,v$ units. 
\em}\end{ex}

\subsubsection{$\rdD_n$ Singularities} For $n \geq 4$, these singularities are given by the spectrum 
of the ring $R = k [[x,y,z]]/(x^2+y^2 z + z^{n-1})$. Again the minimal desingularization and class 
group are well-known \cite[$\S 14, \S 17$]{lipman}. For $n \geq 6$, blowing up the singularity gives 
a single rational $(-2)$-exceptional curve $E_2$ with an $\rdA_1$ and a $\rdD_{n-2}$ singularity on it. 
Blowing up the $\rdA_1$ gives the exceptional divisor $E_1$ meeting $E_2$ transversely and blowing up 
the $\rdD_{n-2}$ produces another rational $(-2)$ exceptional curve $E_4$ which meets $E_2$ at the 
$A_1$ singularity contained in $E_4$: blowing up the $A_1$ produces $E_3$ which connects $E_2$ and $E_4$. 
Continuing in this fashion we obtain a chain of rational $(-2)$-curves until we get to the $\rdD_5$ or $\rdD_4$. 
The $\rdD_4$ resolves by a sequence of two blow-ups, the first resulting in a single exceptional curve 
$E_{n-2}$ with three type-$\rdA_1$ singularities along it, one lying on $E_{n-4}$ - blowing up this 
$\rdA_1$ gives $E_{n-3}$ connecting $E_{n-4}$ to $E_{n-2}$ and blowing up the other two gives 
$E_{n-1}$ and $E_{n}$. The resolution for the $\rdD_5$ first gives a single exceptional curve with an 
$\rdA_1$ and an $\rdD_3$ along it, but $\rdD_3 \cong \rdA_3$ by Example \ref{d3=a3} and is resolved as above. 
Thus the exceptional divisor consists of $n$ rational $(-2)$-curves, the first $n-2$ in order forming a 
chain and the last two meeting $E_{n-2}$ transversely forming a split tail. The corresponding Dynkin diagram is
$$\entrymodifiers={!! <0pt, .6ex>+}\xymatrix{ & & &  \overset{E_{n-1}}{\circ}\ar@{-}[d] & \\
\underset{E_1}{\circ} \ar@{-}[r] & \underset{E_2}\circ\ar@{-}[r] & \dots & \underset{E_{n-2}}{\circ}\ar@{-}[l] \ar@{-}[r]  & \underset{E_n}{\circ}  
}
$$
To compute the class of a curve $C$ in the formal Picard group, form the strict transform $\widetilde C$ in 
the resolution described above and set $(a_1, \dots a_n) = (\widetilde C . E_1, \dots \widetilde C . E_n) \in \Z^n$ 
as before. We can again calculate $\APic \Spec R$ as the quotient of the free group $\Z^n$ on the generators 
with relations given by the intersection numbers ~\cite[$\S 14,17$]{lipman}: 
\[ 
-2u_1+u_2, u_1-2u_2+u_3, \dots, u_{n-3}-2u_{n-2} + u_{n-1} + u_n, u_{n-2} - 2u_{n-1}, u_{n-2} - 2u_n  
\] 
so that $u_2=2u_1, u_3= 3u_1, \dots, u_{n-2} = (n-2) u_1$; from the last three relations deduce 
that $(2n-2) u_1 = 2(u_{n-1}+u_{n-2}) = (2n-4) u_1$, so that $2u_1=0$. Thus if $n$ is odd, 
$u_{n-1} + u_n = 0$ and $2u_{n-1}=2u_n = u_1$, giving $\APic (\Spec R) \cong \Z/4\Z$; if $n$ is even, 
$u_{n-1}+u_{n-2} = u_1$ and $2u_{n-1}=2u_n = 0$, so that $\APic (\Spec R) \cong \Z/2\Z \oplus \Z/2\Z$. 

\subsubsection{$\rdE_n$ Singularities} The local equation for the $\rdE_6$ (resp. $\rdE_7, \rdE_8$) type 
singularity is $x^2+y^3+z^4$ (resp. $x^2+y^3+yz^3, x^2+y^3+z^5$) with class group $\Z / 3 \Z$ (resp. $\Z / 2 \Z, 0$). 
The exceptional curves for the minimal desingularization of an $\rdE_n, n=6,7,8$ singularity correspond to the 
Dynkin diagram 
\[ \entrymodifiers={!! <0pt, .6ex>+}
\xymatrix{ & & &  \overset{E_{n-2}}{\circ}\ar@{-}[d] & \\
\underset{E_1}{\circ} \ar@{-}[r] & \underset{E_2}\circ\ar@{-}[r] &
\dots & \underset{E_{n-3}}{\circ}\ar@{-}[l] \ar@{-}[r]  & 
\underset{E_{n-1}}{\circ} \ar@{-}[r]  & \underset{E_n}{\circ}} 
\]
and the resolutions are straightforward to calculate.

\subsubsection{The fundamental cycle and smooth representatives for local Picard groups}
In this section we use the fundamental cycle to prove that at any 
rational double point, each class in the local Picard group can be 
lifted to a locally smooth curve.

If $p \in S$ is a rational double point with minimal desingularization $X \to S$, the {\em fundamental cycle} 
$\xi_0$ is the unique minimal effective nonzero exceptionally supported divisor on $X$ having nonpositive 
intersection with each exceptional curve \cite[pp. 131-2]{artin}. With the conventions used in describing 
the minimal desingularizations above, the fundamental cycles for each singularity type are given in Table \ref{funcycle}.

\begin{table}[ht]
    \caption{Fundamental cycle for rational double points.}
    \label{funcycle}
    \begin{tabular}{l|lll}
	Type   & $\xi_0$  \\
	\hline 
	$\rdA_n$ & $E_1+E_2+\dots+E_n$   \\
	\hline 
	$\rdD_n$ & $E_1+2E_2+2E_3+\dots+2E_{n-2}+E_{n-1}+E_n$   \\
	\hline 
	$\rdE_6$ & $E_1+2E_2+3E_3+2E_4+2E_5+E_6$   \\
	\hline
	$\rdE_7$ & $E_1+2E_2+3E_3+4E_4+2E_5+3E_6+2E_7$  \\
	\hline
	$\rdE_8$ & $2E_1 + 3E_2 + 4E_3 + 5E_4 + 6E_5 + 3E_6 + 4E_7 + 2E_8$ \\
    \end{tabular}
\end{table}

\begin{rmk}\label{admiss}{\em 
The fundamental cycle has the property that for a non-exceptional smooth curve $D$ on $X$, $D.\xi_0$ is equal to the multiplicity of $\pi(D)$ on $S$~\cite[Prop. 5.3]{B}.  For a given rational singularity, the exceptional divisors $E_i$ with coefficient $1$ 
in the fundamental cycle $\xi_0$ are said to be {\em admissible} \cite{J2}. 

We say that a divisor on the minimal desingularization of a rational double point has {\em small intersection with the fundamental cycle} if its $n$-tuple of intersection numbers consists of all $0$s except possibly a $1$ at a single admissible curve. Using the generators and relations for the intersection numbers of the exceptional curves, it is easy enough to verify that the admissible curves represent each nonzero element of the class group for each singularity. Thus, since the relations are generated by the intersection multiplicities of the exceptional curves themselves, it is clear that on the minimal desingularization, it is immediate that any divisor on the desingularization differs from one having small intersection with the fundamental cycle by some exceptionally supported divisor.
}\end{rmk} 

We begin with a combinatorial lemma that shows that in Remark~\ref{admiss}, if all of the intersection numbers are positive to begin with, the necessary adjustment is by an {\em effective} divisor. This lemma generalizes and makes a small correction to~\cite[Prop. 5.5]{B}.

\begin{lem}\label{grunge}
Let $X \to S$ be a minimal desingularization of a rational double point $p \in S$ 
with exceptional divisors $E_1, \dots E_n$ and fix a divisor $F$ on $X$ such that 
$F.E_i \geq 0$ for all $i$. Then there exists an effectively supported divisor $G$ 
on $X$ such that $(F+G).E_i = 0$ for all but at most one $i$; if such $i$ exists, 
then $(F+G).E_i=1$ and $E_i$ appears with coefficient $1$ in the fundamental cycle $\xi_0$.
\end{lem}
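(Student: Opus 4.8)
The plan is to translate everything into the negative-definite intersection lattice and then extract $G$ as a maximizer of a concave potential. Write $M = (E_i \cdot E_j)$ for the intersection matrix, so $M_{ii} = -2$, $M_{ij} \in \{0,1\}$ for $i \neq j$, and $M$ is negative definite with the $E_i$ indexing the vertices of a tree (the $\rdA$--$\rdD$--$\rdE$ Dynkin diagram). An exceptionally supported divisor $G = \sum_i c_i E_i$ is effective exactly when $c = (c_i) \geq 0$, and its vector of intersection numbers is $Mc$; writing $a = (F \cdot E_i)_i \geq 0$, the goal is to produce $c \geq 0$ for which $a + Mc$ is \emph{small}, meaning it is either $0$ or a single $1$ in an admissible slot. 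The device I would use is the potential
\[
Q(c) = \tfrac12\, c^{t} M c + a^{t} c,
\]
which is strictly concave (as $M$ is negative definite) and tends to $-\infty$, so it attains its maximum on $\Z_{\geq 0}^n$ on a finite set; I would single out the maximizer $c^\ast$ of \emph{least} total $\sum_i c_i^\ast$. The one computation driving everything is that adding $E_i$ (``firing'' $i$) changes $Q$ by $a_i' - 1$, where $a' = a + Mc$ is the current intersection vector.

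First I would read off the two inequalities that pin down $a'$. Since $c^\ast$ is a maximum, firing any $i$ cannot increase $Q$, giving $a_i' \leq 1$ for all $i$. For the reverse bound, if $a_i' < 0$ then necessarily $c_i^\ast \geq 1$ (otherwise $a_i' = a_i + \sum_{j \sim i} c_j^\ast \geq 0$), and \emph{unfiring} $i$ changes $Q$ by $-a_i' - 1 \geq 0$; this is strictly positive when $a_i' \leq -2$, contradicting maximality, and is zero when $a_i' = -1$, contradicting the minimality of the total. Hence $a' \geq 0$, and combining, $a' \in \{0,1\}^n$ with $c^\ast \geq 0$, so the divisor $G = \sum c_i^\ast E_i$ is already effective.

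It remains to upgrade ``entries in $\{0,1\}$'' to ``small,'' and this is where the tree structure and the fundamental cycle enter. The key observation is that for a \emph{connected} set $S$ of vertices one has $\chi_S^{t} M \chi_S = -2$ (a connected subgraph of a tree on $s$ vertices has $s-1$ edges), so firing all of $S$ at once changes $Q$ by $-1 + \sum_{i \in S} a_i'$. If $a'$ had two entries equal to $1$, I would take $S$ to be the unique path joining them; then $\sum_{i \in S} a_i' \geq 2$ and $Q$ strictly increases, contradicting maximality. Thus $a'$ has at most one nonzero entry, equal to $1$. Finally, to see that this entry sits at an admissible curve $E_j$, I would fire the fundamental cycle $\xi_0 = \sum m_i E_i$: using $\xi_0^2 = -2$ for a rational double point, this changes $Q$ by $-1 + m_j$, which is positive whenever $m_j \geq 2$, again contradicting maximality. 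Hence $m_j = 1$, i.e.\ $E_j$ is admissible, exactly as required.

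The conceptual core is the choice of the potential $Q$ together with the ``least-total maximizer,'' and the recognition that the moves certifying the conclusion are not single curves but connected subtrees and the fundamental cycle itself; once these are in place, everything reduces to the sign bookkeeping above. The main points needing care are the two self-intersection facts, $\chi_S^2 = -2$ for a connected subtree and $\xi_0^2 = -2$ for the fundamental cycle of a rational double point, along with the (standard) fact that the $\rdA$--$\rdD$--$\rdE$ graphs are trees; these are precisely what force each potential increment to land on $-1$ plus a count of the $1$'s, and they are where I expect the only real friction.
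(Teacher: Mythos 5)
Your proof is correct, and it takes a genuinely different route from the paper's. The paper argues case by case through the Dynkin types $\rdA_n$, $\rdD_n$, $\rdE_6$, $\rdE_7$, $\rdE_8$, exhibiting at each stage an explicit effective combination of exceptional curves to add (e.g.\ $E_j+\dots+E_k$ for $\rdA_n$, or $\xi_0$ and ad hoc cycles like $2E_1+4E_2+6E_3+8E_4+4E_5+6E_6+3E_7$ for $\rdE_7$) and checking that the intersection vector improves; it is constructive but long, and each type needs separate verification. You instead run a uniform variational argument: maximize the concave potential $Q(c)=\tfrac12 c^tMc+a^tc$ over $\Z_{\geq 0}^n$, take the maximizer of least total, and read off the conclusion from three "move" computations, all of which I checked: firing $E_i$ changes $Q$ by $a_i'-1$ (forcing $a_i'\leq 1$), unfiring changes it by $-a_i'-1$ (forcing $a_i'\geq 0$, using minimality of the total to exclude $a_i'=-1$, and using $a_i\geq 0$ to ensure $c_i^*\geq 1$ when $a_i'<0$), firing a connected subtree $S$ changes it by $-1+\sum_{i\in S}a_i'$ (killing two $1$'s via the path joining them, since $\chi_S^tM\chi_S=-2s+2(s-1)=-2$), and firing $\xi_0$ changes it by $-1+m_j$ (forcing $m_j=1$, i.e.\ admissibility, since $\xi_0^2=-2$ for a rational double point and all coefficients $m_i$ of $\xi_0$ are $\geq 1$). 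What the paper's approach buys is explicitness — one sees exactly which divisor $G$ does the job for each type; what yours buys is uniformity and economy — no case analysis, and it isolates precisely the structural inputs needed (negative definiteness, simple-tree dual graph, $E_i^2=-2$, $\xi_0^2=-2$), which makes it clear how the statement would or would not extend beyond the ADE setting. The two facts you flagged as friction points are indeed the right ones to check, and both hold for rational double points.
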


\begin{proof} Letting $s_i = F.E_i$, we are done if $\sum s_i \leq 1$. 
Assuming $\sum s_i > 1$, we show how to add sums of the $E_i$ to achieve the statement in each case.

For an $\rdA_n$-singularity let $j$ (resp. $k$) be the least (resp. greatest) index $i$ with $s_i > 0$. 
If $j < k$, then adding $E_j+E_{j+1} + \dots +E_k$ to $F$ decreases $s_j, s_k$ by $1$, increases 
$s_{j-1}, s_{k+1}$ by $1$ (if these exist) and has no effect on the remaining $s_i$. 
Repeat until $j=1$ or $k=n$ at which point $\sum s_i$ decreases. 
If $1 < j=k < n$ and $s_j \geq 2$, then adding $E_j$ reduces to the previous case. 
Eventually $\sum s_i = 1$ and we are done and the final statement is clear because 
the coefficient of each $E_i$ in $\xi_0$ is $1$.

For a $\rdD_n$ singularity, note that adding $E_{n-1}$ (resp. $E_n$) decreases $s_{n-1}$ (resp. $s_n$) by $2$ 
and increases $E_{n-2}$ by $1$, so we may assume $0 \leq s_{n-1}, s_n \leq 1$. We may assume $s_i = 0$ for $1 < i < n-1$. 
If this is not so, let $k$ be the largest $i$ in this range with $s_k > 0$. 
Adding $s_k (E_k + 2E_{k+1} + \dots + 2E_{n-2}+E_{n-1}+E_n)$ increases $s_{k-1}$ by $s_k$, decreases $s_k$ to $0$ 
and leaves the remaining $s_i$ fixed, thus $k$ decreases and we may continue until $k=1$. 
Adding $2 E_1 + \dots + 2 E_{n-2} + E_{n-1} + E_n$ decreases $s_1$ by $2$ and fixes the remaining $s_i$, 
so we may assume $0 \leq s_1, s_{n-1}, s_n \leq 1$ (the rest are zero). 
We are done if at most one of these is nonzero, otherwise adding 
$s_1 E_1 + s_2 E_2 + E_3 + \dots + E_{n-2} + s_{n-1} E_{n-1} + s_n E_n$ 
switches $s_1, s_{n-1}, s_n$ from 0 to 1 or 1 to 0 and fixes the rest, finishing the proof.
 
For an $\rdE_6$, note that adding $\xi_0$ reduces $s_4$ while fixing the rest, so we may assume 
$s_4=0$ as needed. Now applying the $\rdA_5$ strategy to remaining chain reduces 
to the case that at most one of $\{s_i\}_{i \neq 4}$ is $1$ while the rest of these are zero; 
if $s_1 = 1$ or $s_6=1$, we are done; if $s_2=1$, adding $E_1+2E_2+2E_3+E_4+E_5$ sets $s_2=0$ and 
$s_6=1$ (the case $s_5=1$ is similar); if $s_3=1$, adding $E_1+2E_2+3E_3+E_4+2E_5+E_6$ decreases 
$s_3$ by $1$ and increases $s_4$ by $1$. 

For an $\rdE_7$ singularity, applying the $\rdD_6$ strategy to $E_1, \dots, E_6$ reduces to the case 
$s_2=s_3=s_4=0$ and at most one of $s_1, s_5, s_6$ is $1$; $s_7$ will likely be increased in the process, 
but adding $\xi_0$ decreases $s_7$ by one while fixing the rest, so we may assume $s_7=0$. 
If $s_1=1$, we are done; if $s_5=1$, adding $E_2+2E_3+3E_4+2E_5+2E_6+E_7$ sets $s_5$ to $0$ while increasing 
$s_1$ to $1$; if $s_6=1$, adding $2E_1+4E_2+6E_3+8E_4+4E_5+6E_3+3E_7$ sets $s_6=0$ while fixing the rest.

Finally, for an $\rdE_8$, the exceptional curves $E_2, \dots, E_8$ form an $\rdE_7$ singularity; as above 
we can add exceptional curves to reach a point where $s_2 = 0$ or $1$, $s_3 = s_4 = \dots = s_8 = 0$, 
and $s_1$ is still positive. Adding $\xi_0$ has the effect of decreasing $s_1$ by $1$ and fixing the other $s_j$, 
so repeated addition of $\xi_0$ brings us to the situation where all the $s_j$ are $0$, in which case we 
are finished, or $s_2=1$ and all other $s_j$ are $0$. In this latter case add 
$3E_1+6E_2+8E_3+10E_4 + 12E_5 + 6E_6 +  8E_7 + 4E_8$, which has the effect of decreasing $s_1$ to $0$ 
and leaving all the other $s_j$ constant. 
\end{proof}

\begin{prop} Let $p$ be a rational double point on a projective surface 
$S$ and let $x \in \Cl (\O_{S,p})$. Then there is an effective divisor 
$D \in \Cl S$ smooth at $p$ restricting to $x$. 
\end{prop}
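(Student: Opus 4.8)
The plan is to lift $x$ to a global class, represent it by an effective divisor, and then replace that divisor by a smooth one by passing to the minimal desingularization $\pi\colon X \to S$ and applying Lemma~\ref{grunge}. First I would use Proposition~\ref{interpret} to lift $x$ to a class $\delta \in \Cl S$. Choosing a very ample Cartier divisor $H$ on $S$ and replacing $\delta$ by $\delta + mH$ for $m \gg 0$, the divisorial sheaf $\O_S(\delta + mH)$ acquires a nonzero section, so $x$ is represented by an \emph{effective} Weil divisor $D_1$; since $H$ is Cartier its restriction to $\widehat\O_{S,p}$ is trivial, so $D_1$ still restricts to $x$. Passing to $X$, the strict transform $F = \widetilde{D_1}$ is effective with no exceptional components, hence $F.E_i \geq 0$ for every exceptional curve $E_i$, and the intersection vector $(F.E_1,\dots,F.E_n)$ represents $x$ in $\Cl\widehat\O_{S,p}$.

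Next I would apply Lemma~\ref{grunge} to $F$: there is an effectively supported exceptional divisor $G \geq 0$ with $(F+G).E_i = 0$ for all $i$ except possibly a single admissible index $j$, where $(F+G).E_j = 1$. Adding the exceptional $G$ changes the intersection vector only by relations, so $F+G$ still has local class $x$, and by construction $(F+G).\xi_0$ equals $1$ (the coefficient of the admissible curve $E_j$ in $\xi_0$), or $0$ when $x=0$. This reduces the problem to producing, within the class of $F+G$, a genuine smooth irreducible non-exceptional curve $\widetilde D$, since for such a curve the relation $\widetilde D.\xi_0 = \mathrm{mult}_p \pi(\widetilde D)$ from Remark~\ref{admiss} will force $\pi(\widetilde D)$ to be smooth at $p$.

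The final, and hardest, step is the smoothing. I would set $M = F + G + t\,\pi^* A$ for an ample divisor $A$ on $S$ and $t \gg 0$. Because $\pi^* A$ meets every exceptional curve in degree $0$, the numbers $M.E_i$ remain equal to the small vector above, while $\pi_* M = D_1 + tA$; moreover $\pi_*\O_X(M) = \O_S(D_1 + tA)$ is globally generated for $t \gg 0$, so $|M|$ is base-point free away from the exceptional locus and a general member is smooth and irreducible there by Bertini. The difficulty is concentrated entirely along the exceptional fibre: since $\pi^* A$ is only $\pi$-nef and $M.E_i = 0$ for $i \neq j$, the usual ampleness-based Bertini and Serre arguments do not apply, and one must rule out that a general member picks up an exceptional component or a singular (or tangential) intersection there. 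I expect to control this using the relative vanishing $R^1\pi_*\O_X(M) = 0$ (valid since $p$ is rational and $M$ is $\pi$-nef) together with the degree computation $M.E_j = 1$, $M.E_i = 0$, to show that the general member meets the exceptional locus transversally in a single point of $E_j$ and contains no $E_i$.

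Granting this last point, the general member $\widetilde D$ of $|M|$ is a smooth irreducible non-exceptional curve with $\widetilde D.\xi_0 = 1$ and local class $[e_j] = x$, so $D = \pi_*\widetilde D$ is the divisor we want: it is effective, it is smooth at $p$ by the multiplicity formula of Remark~\ref{admiss}, it lies in $\Cl S$ (with global class $D_1 + tA$), and it restricts to $x$ at $p$. When $x = 0$ the small vector is zero, $\widetilde D$ misses the exceptional locus, and $D$ simply avoids $p$. The main obstacle throughout is the transversality and base-point-freeness analysis along the exceptional fibre in the third paragraph, where the positivity needed for a moving smooth system is in tension with the vanishing of $M.E_i$ that keeps the image unibranch and smooth.
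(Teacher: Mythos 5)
Your first two paragraphs track the paper's proof exactly: lift $x$ to $\Cl S$ via Proposition~\ref{interpret}, make it effective by adding a multiple of an ample Cartier class (which restricts trivially at $p$), pass to the strict transform on the minimal desingularization, and apply Lemma~\ref{grunge} to reach an intersection vector that is zero except possibly for a single $1$ at an admissible curve. The gap is in your third paragraph, and it is exactly the step the whole proof turns on: you never actually prove that the general member of your linear system contains no exceptional curve and is smooth along the exceptional fibre. You say you ``expect to control this'' using $R^1\pi_*\O_X(M)=0$, but that vanishing by itself does not yield what you need; the statement you would really have to invoke is that a divisor meeting every $E_i$ nonnegatively on the resolution of a rational singularity is relatively globally generated near the fibre (a theorem of Lipman \cite{lipman}), and then still argue transversality at the one point of $E_j$. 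As written, the conclusion ``the general member meets the exceptional locus transversally in a single point of $E_j$ and contains no $E_i$'' is an assertion, not a proof, and it is precisely the hard content of the proposition.

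The paper closes this gap with a short, elementary argument that needs no ample twist $t\,\pi^*A$ and no relative vanishing. Setting $A=\widetilde C+G$, one shows no $E_i$ is a fixed component of $|H^0(\O_X(A))|$ by noting that the composite $H^0(\O_X)\to H^0(\O_{E_i})\to H^0(\O_X(A)|_{E_i})$ is nonzero, so $H^0(\O_X(A))\to H^0(\O_X(A)|_{E_i})$ is nonzero and hence $H^0(\O_X(A-E_i))\to H^0(\O_X(A))$ is not surjective. Then: if $A.E_i=0$, the general member (having no component in common with $E_i$) must miss $E_i$ entirely; if $A.E_j=1$ at the single admissible curve, the restricted system can have at most one base point $P_0\in E_j$, and even if the general member passes through $P_0$ it is forced to be smooth there because its local intersection multiplicity with $E_j$ is $1$. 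Bertini handles all other points of the exceptional locus, and then $\widetilde D.\xi_0\le 1$ gives $\mult_p\pi(\widetilde D)\le 1$ by \cite[Prop. 5.4]{B}, exactly as in your final paragraph. Note also that global smoothness and irreducibility of $\widetilde D$, which your ample twist was designed to produce, are never needed: only smoothness at the points lying over $p$ matters, which is why the untwisted system $|A|$ suffices.
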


\begin{proof} 
The class $x$ lifts to $C \in \Cl S$ by Proposition \ref{interpret}. We may assume that $C$ is effective 
after adding a high multiple of $\O_S (1)$, since $\O_S (1)$ has trivial restriction to $\Cl (\O_{S,p})$.
Let $\pi: X \ra S$ be the minimal resolution of singularities with irreducible exceptional curves $E_i$ 
and let $\widetilde{C}$ be the strict transform of $C$ on $X$. By Lemma~\ref{grunge} there exists 
an effective exceptionally supported divisor $G$ on $X$ such that $(\widetilde{C} + G).E_i=0$ for 
all but at most one of the $E_i$, and if such an $E_i$ exists, then $(\widetilde{C}+G).E_i=1$ 
and $E_i$ is admissible.

Setting $A = \widetilde{C}+G$ we claim that no exceptional curve $E_i$ is a fixed component of the 
linear system $|H^0(\O_X(A))|$. Indeed, the composite map $\O_X \to \O_X (A)|_{E_i}$ in the upper 
right corner of the diagram 
\[
\begin{array}{ccccccccc}
0 & \ra & \O_X(-E_i) & \ra & \O_X & \ra & \O_{E_i} & \ra & 0 \\ 
& & \downarrow & & \downarrow & & \downarrow & & \\ 
0 & \ra & \O_X(A-E_i) & \ra & \O_X(A) & \ra & \O_X(A)|_{E_i} & \ra & 0 
\end{array} 
\] 
is nonzero on global sections, the first map being surjective and the second injective. 
Therefore the map $H^0 (\O_X(A)) \ra H^0(\O_X(A)|_{E_i})$ is also nonzero, which implies that 
$H^0(\O_X(A-E_i)) \ra H^0(\O_X (A))$ is not surjective, verifying the claim.

Now consider the general member $\widetilde D \in |H^0 (\O_X (A))|$. 
If $A.E_i=0$, then $\widetilde D$ misses $E_i$ because $E_i$ is 
not a fixed component. Therefore if $A.E_i=0$ for all $i$, then 
$D = \pi (\widetilde{D})$ misses $p$ and we are done. Otherwise there is at 
most one $E_i$ for which $A.E_i=1$ and $|H^0 (\O_X(A))|$ may have {\em one} fixed 
point $P_0 \in E_i$. By Bertini's theorem the only singular point for the general 
member of the linear system on the exceptional locus can be $P_0$, but even in 
this case $\widetilde{D}$ is smooth at $P_0$ because $\widetilde{D}.E_i=1$. 
Therefore $\widetilde{D}$ is smooth along $E = \bigcup E_i$ and meets $\xi_0$ with 
multiplicity one, so $\mult_p (\pi(\widetilde{D})) = \widetilde{D}.\xi_0=1$ \cite[Prop. 5.4]{B} 
and $D = \pi(\widetilde{D})$ is smooth at $p$. 
Moreover $D$ has the same class $x \in \Cl (\O_{S,p})$ as $C$ because 
$\widetilde{D}$ and $\widetilde{C}$ differ by a sum of exceptional divisors. 
\end{proof}


\subsection{Characterization of Class Groups} We prove Theorem \ref{applic2}. 

Theorem~\ref{ufd} dispenses with the UFD case $H=0$, so assume $H \neq 0$; further, 
we may assume $T \neq \rdE_8$ because there $\widehat{\Picloc}\, = 0$. If $T = \rdE_6$, 
we take the affine quartic surface $S$ with equation $x^2+y^3+z^4=0$, for which the $\rdE_6$ 
singularity at the origin $p$ is apparent. The smooth curve $C$ with ideal $(x-i z^2, y)$ lies 
on $S$ and passes through $p$. If $C$ restricts to $0$ in $\Picloc p$, then $C$ is Cartier on 
$S$ at $p$, impossible because $C$ is smooth at $p$ while $S$ is not: thus $C$ defines a non-zero 
element in $\Picloc p$ which generates all of $\widehat{\Picloc}\, p \cong \Z / 3 \Z$, the only 
non-trivial subgroup. Similarly for $T = \rdE_7$ we take the affine quartic $S$ defined by the 
equation $x^2+y^3+yz^3$ which contains the smooth $z$-axis $C$ passing through the origin $p$. 

The non-trivial subgroups $H$ of the formal Picard groups at $\rdA_n$ and $\rdD_n$ singularities 
are more difficult, requiring the construction of a $1$-dimensional base locus inducing in its 
general surface the appropriate singularity and subgroup. Starting with the $\rdA_n$ case, 
let $H = \cyc{k} \subset \Z/(n+1)\Z$ be a non-trivial subgroup for $k|(n+1)$. 
If $k=1$, the surface with equation $xy-z^{n+1}$ has an $\rdA_n$-singularity at the origin 
and the line $L$ with ideal $(x,z)$ has class $1$ by Example \ref{image} (a), generating 
the full group $Z/(n+1)\Z$. Taking $q=k$ and $m-1 = (n+1)/k$ in Proposotion \ref{step1} below gives the 
nonzero proper subgroups.

\begin{prop}\label{step1} 
Let $Z \subset \Pthree_\C$ be the subscheme with ideal 
$\I_{Z} = (x^2,xy,x z^q - y^{m-1}, y^m)$ for $m \geq 3$. 
Then the very general surface $S$ containing $Z$ has an $\rdA_{(m-1)q-1}$ singularity at 
$p=(0,0,0,1)$ and $\Picloc p \cong \Z / (m-1) \Z$ is generated by $C$.    
\end{prop}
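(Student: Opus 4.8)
My plan is to feed a base-locus analysis into Theorem~\ref{gens} and then run an explicit coordinate computation on the general local equation that at once identifies the singularity type of $S$ at $p$ and pins down the class of $C$.

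First I would set up the geometry. Working in the affine chart with $p$ at the origin of $\A^3$, one has $\sqrt{\I_Z}=(x,y)$, so $Z$ is supported on the line $C:x=y=0$ through $p$, which is its only codimension-two component. Where $z$ is a unit the relation $xz^q=y^{m-1}$ solves $x$ in terms of $y$, placing $Z$ on the smooth surface $x=y^{m-1}z^{-q}$, so $Z$ has embedding dimension $\le 2$ off $p$; at $p$ every generator lies in $\fm^2$, so the embedding dimension there is $3$. Hence the embedding-dimension-$3$ locus is $\{p\}$, $Z$ is superficial, and Theorem~\ref{gens} applies: the very general $S\supset Z$ is normal and $\Picloc p=\Cl\O_{S,p}$ is cyclic, generated by $[C]$. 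It remains to identify the singularity and compute the order of $[C]$ inside $\Cl\widehat\O_{S,p}$.

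For a very general $S$ the germ at $p$ is $g=ax^2+bxy+c(xz^q-y^{m-1})+ey^m$ with $a,b,c,e$ general, hence units. Completing the square in $x$ (valid since $a$ is a unit) gives $g=aX^2+h$ with $X=x+\tfrac{by+cz^q}{2a}$ and $h=-\tfrac{b^2}{4a}(y+\tfrac cb z^q)^2-cy^{m-1}+ey^m$. Substituting $Y=y+\tfrac cb z^q$, the function $h$ acquires nondegenerate quadratic part $-\tfrac{b^2}{4a}Y^2$ and lowest pure-$z$ term a unit times $z^{(m-1)q}$; a further square-completion in $Y$ (the splitting lemma, in the spirit of Lemmas~\ref{comproots} and~\ref{xyfactor}) produces $g=aX^2+uY'^2-\tilde h(z)$ with $u$ a unit and $\tilde h=(\mathrm{unit})z^{(m-1)q}$. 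Writing $aX^2+uY'^2=UV$ (square roots of units exist by Lemma~\ref{comproots}) and absorbing the unit of $\tilde h$ into a new $z'$ puts $g$ in the standard form $UV-z'^{(m-1)q}$, an $\rdA_{(m-1)q-1}$ singularity with $\Cl\widehat\O_{S,p}\cong\Z/((m-1)q)\Z$. The delicate case is $m=3$, where $\tilde h$ and the correction from the linear-in-$Y$ term both have order $z^{2q}$; here one must check that their leading coefficients do not cancel, which holds exactly because $b^2\neq 4ac$ for general $a,b,c$.

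Finally I would compute $[C]$ by following $C:x=y=0$ through these changes. Along $C$ both $X$ and $Y'$ are units times $z^q$, so $U$ and $V$ are each $O(z^q)$; since $g|_C=0$ forces $UV|_C=z'^{(m-1)q}$ of order $(m-1)q$ and the order-$q$ coefficients of $U$ and $V$ cannot both vanish, after possibly swapping $U$ and $V$ the factor $V$ has order $q$ and $U$ has order $(m-2)q$. As $(m-2)q=n-q+1$ for $n=(m-1)q-1$, the curve $C$ realizes the ideal $(U-\alpha z'^{(m-2)q},\,V-\beta z'^{q})$ of Example~\ref{image}(c) with $k=q$, so $[C]=q$ in $\Z/((m-1)q)\Z$. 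The subgroup $\cyc{q}$ has order $(m-1)q/\gcd(q,(m-1)q)=m-1$, and since $\Picloc p\hookrightarrow\Cl\widehat\O_{S,p}$ is generated by $[C]$, we get $\Picloc p\cong\Z/(m-1)\Z$, generated by $C$. I expect the crux to be this last coordinate-tracking together with the genericity bookkeeping (especially the $m=3$ collision): one must push the square-completions far enough to see $C$ acquire the Example~\ref{image}(c) form with the correct exponent $q$ while confirming the reduction to the standard $\rdA_{(m-1)q-1}$ equation is valid.
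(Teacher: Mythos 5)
Your proposal is correct, and its skeleton matches the paper's: Theorem \ref{gens} gives normality and that $\Cl \O_{S,p}$ is generated by the line $C$; an explicit coordinate reduction identifies the $\rdA_{(m-1)q-1}$ singularity; and Example \ref{image}(c) converts the resulting ideal of $C$ into the class $q$, whose subgroup $\cyc{q} \subset \Z/((m-1)q)\Z$ is $\Z/(m-1)\Z$. The execution in the middle, however, is genuinely different. The paper never diagonalizes: it keeps the hyperbolic form $xy$, absorbs the $(m-1)$-st power into a product $XY$ via the elementary-transformation Lemma \ref{xyfactor}, performs two further translations to reach $X_2Y_2 - A^{m-2}Bz^{(m-1)q}$, and then tracks the ideal $(x,y)$ explicitly through every substitution until it visibly has the Example \ref{image}(c) form. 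You instead complete squares (a splitting-lemma reduction to $aX^2+uY'^2-\tilde h(z)$), factor over $\C$ into $UV - z'^{(m-1)q}$, and recover the class of $C$ by a clean order-of-vanishing argument: $U|_C + V|_C$ is a unit times $X|_C$, of order exactly $q$, while $U|_C\cdot V|_C = z'^{(m-1)q}$, so after a possible swap the orders along $C$ are $q$ and $(m-2)q$ and the ideal of $C$ has the Example \ref{image}(c) shape with $k=q$. This replaces the paper's laborious ideal tracking, at the cost of determining the class only up to sign ($\pm q$), which is harmless since the subgroup is the same. The trade-off is rigor in the reduction: $a,b,c,e$ are units of $\O_{\Pthree,p}$, hence power series in $x,y,z$ rather than constants, so your displayed $h$ after the first square completion still involves $x$ through the coefficients; the reduction must be run as an iteration (as you hint by citing Lemmas \ref{comproots} and \ref{xyfactor}), and you should state that the leading-order facts you rely on -- $\ord \tilde h = (m-1)q$ and $\ord X|_C = \ord Y'|_C = q$ -- are unaffected by the higher-order corrections (they are, since all corrections enter at higher order). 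Two minor slips: with your sign conventions the $m=3$ quadratic part is $ax^2+bxy-cy^2$, so the genericity condition is $b^2+4ac \neq 0$ rather than $b^2 \neq 4ac$ (and once it holds, the non-cancellation in $\tilde h$ is in fact automatic); and, like the paper, you verify superficiality only in the affine chart, leaving the behavior of the projective closure at infinity implicit.
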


\begin{proof}
For appropriate units $a,b,c \in \O_{\Pthree,p}$ the general surface $S$ containing $Z$ has equation 
\begin{equation} 
xy + ax^2 + b y^{m-1} + c y^m -b x z^q = x \ubm{(y+ax)}{y_1} + \ubm{(b+cy)}{u} y^{m-1} - b x z^q.
\end{equation}
With $y_1 = y+ax$ and unit $u=b+cy$ as above, we may write 
\begin{equation}\label{zwei}
x {y_1} + u ({y_1}-ax)^{m-1} - b x z^q = {x_1}{y_1} + c({y_1}+{x_1})^{m-1}-d {x_1} z^q
\end{equation}
for new units $c,d$ after setting $x_1 = -ax$ and multiplying by $-1/a$. 
If $m=3$, the first two terms are a homogeneous quadratic form: for general $a,b,c$, this 
factors into two linear terms and making the corresponding change of variable brings the equation to 
the form $XY + (AX+BY) z^q$ for units $A,B$. 

For $m \geq 4$, expand the $(m-1)$st power in equation (\ref{zwei}) as 
\[
c({y_1}+{x_1})^{m-1} = {y_1} \ubm{[c {y_1}^{m-2}]}{g_1} + 
{x_1} \ubm{[c ((m-1) {y_1}^{m-2} + \dots + {x_1}^{m-2})]}{h_1}
\]
and set $x_2 = {x_1} + g_1, y_2 = {y_1} + h_1$, when equation (\ref{zwei}) becomes 
\begin{equation}
x_2 y_2 - g_1 h_1 - d x z^q
\end{equation} 
with $g_1 h_1 \in (x,y)^{2m-4} \subset (x,y)^{m}$. Applying Lemma \ref{xyfactor}, 
we make another change of variables from $x_2,y_2$ to $X,Y$ for which $x_2 y_2 - g_1 h_1 = XY$. 
Looking at the last term, notice that $d x=d (x_2 - c y^{m-2}) = d (x_2 - c (y_2 - h_1)^{m-2})$ 
with $h_1 \in (x,y)^{m-2}$; extracting the multiples of $x_2$ in the resulting power series this may 
be written $A_1 x_2 + B_1 y_2^{m-2}$ with $A_1, B_1$ units. Switching to the variables $X,Y$ and noting 
that $x_2 \equiv X$ mod $(x,y)^{m-1}$ and $y_2 \equiv Y$ mod $(x,y)^{m-1}$ by the proof of Lemma \ref{xyfactor} 
($g_1 h_1 \in (x,y)^{m}$), this may be written $AX + BY^{m-2}$ with $A,B$ units. Thus $S$ has local equation
\[
XY - (AX + BY^{m-2}) z^q
\]
with units $A,B$. Setting $Y_2 = Y-Az^q$, we obtain $X Y_2 - B(Y_2+Az^q)^{m-2} z^q$.  
Multiplying out the $(m-2)$nd power, the second term may be written $Y_2 L + B A^{m-2} z^{(m-1)q}$ with 
\[
L = B (Y_2^{m-3} + A(m-2) Y_2^{m-4} z^q + \dots + A^{m-3}(m-2)z^{(m-3)q}) z^q.
\]
Setting $X_2 = X - L$ gives the form 
\[
X_2 Y_2 - A^{m-2} B z^{(m-1)q},
\] 
showing that $S$ has an $\rdA_{(m-1)q-1}$ singularity.

We now follow the ideal $(x,y)$ through its coordinate changes: 
\[
(x,y) = (x_1, y_1) = (x_2,y_2) = (X, Y)=(X, Y_2 + A z^q) = (X_2 + L, Y_2 - A z^q).
\] 
Working modulo $Y_2 - A z^q$, replacing $Y_2$ with $A z^q$ reduces $L$ to 
\[
B A^{m-3} [1 + (m-2) + \binom{m-2}{2} \dots + (m-2)] z^{(m-2)q} = (2^{m-2} - 1) B A^{m-3} z^{(m-2)q},
\]
so the final form for our ideal is 
$(X_2 + (2^{m-2} - 1) BA^{m-3}z^{(m-2)q}, Y_2-Az^q)$, which has class $q$ in 
$\Cl \widehat \O_{S,p} \cong \Z/(m-1) \Z$ by Example \ref{image} (c).
\end{proof}

\begin{rmk}{\em The scheme $Z$ above is the general form for a locally Cohen-Macaulay 
$m$-structure on a smooth curve for which the $m-1$-substructure is contained in a 
smooth surface. At least if $k < (n+1)/2$, one can take as base locus $Z$ the smooth curve 
with an embedded point given by the ideal 
\[
I_Z = (xy-z^{n+1}, x^3-z^{3(n-k+1)}, y^r-z^{n+1}, xz^{n+1}-z^{2n-k+2}, yz^{n+1}-z^{n+k+1})
\]
with $r = (n+1)/k$, when one can show that the origin is an $\rdA_n$ singularity and the 
supporting smooth curve with ideal $(x,y-z^k)$ has class $k$. Unfortunately we know of no 
locally Cohen-Macaulay curves $Z$ which give rise to $\rdD_n$ singularities in this fashion, 
so $Z$ will consist of a smooth curve and an embedded point in the remainder of the proof.  
}\end{rmk}

The $\rdD_n$ singularities have formal Picard group $\widehat{\Picloc}\, p \cong \Z / 4 \Z$ 
or $\Z / 2 \Z \oplus \Z / 2 \Z$ and we first produce the subgroups of order two as local Picard groups. 
The scheme $Z$ defined by $I_Z = (x^2,y^2z, z^{n-1}, xy^n)$ consists of the line $L: x=z=0$ 
and an embedded point at the origin $p$, hence the very general surface $S$ containing $Z$ 
has local Picard group $\Picloc p$ generated by $L$ by Theorem \ref{gens} 
and $L \neq 0$ in $\Picloc p$ because $L$ is not Cartier at $p$, for $L$ is smooth at $p$ while $S$ is not. 
The local equation of $S$ has the form $ax^2 + by^2z + z^{n-1} + cxy^n$ with units 
$a, b, c \in \O_{\Pthree,p}$. Taking square roots of $a,b$ in $\widehat \O_{\Pthree,p}$ 
the equation becomes 
\[ 
x^2 + y^2z + z^{n-1} + cxy^n=(\ubm{x+\frac{c}{2}y^n}{x_1})^2 + y^2z + z^{n-1} - \frac{c^2}{4} y^{2n} 
\] 
and applying the coordinate change of Lemma \ref{dnen} exhibits the $\rdD_n$ singularity.

To determine the class of $L$, we blow up $S$ at $p$. The local equation of $\widetilde S$ on the 
patch $Y=1$ is $aX^2 + byZ + y^{n-3}Z^{n-1} + cXy^{n-1}$ and the strict transform $\widetilde L$ 
has ideal $(X,Z)$, hence $\widetilde L$ meets the exceptional curve at the new origin of this patch, 
which is the $\rdA_1$ singularity whose blow-up will produce the exceptional divisor $E_1$; 
Resolving this $\rdA_1$ shows that $\widetilde L$ meets $E_1$ but not $E_2$, so $L$ gives the class 
$u_1$ defined above. In particular, $2L = 0$ and $\Picloc p \cong \Z / 2 \Z$. 

When $n$ is odd, $\cyc{u_1}$ is the only subgroup of order $2$ in $\widehat{\Picloc}\, p$ and we are 
finished; when $n$ is even -- at least when $n \geq 6$ -- there are three such subgroups 
$\cyc{u_1}, \cyc{u_{n-1}}$ and $\cyc{u_n}$, the last two being distinguishable from the first, 
but not from one another, as they correspond to the two exceptional curves in the final blow-up. 
We therefore construct a $\rdD_n$ singularity, $n$ even, such that its local Picard group is 
generated by $u_{n-1}$ or $u_n$. 

Write $n=2r$ and define $Z$ by the ideal $I_Z = (x^2, y^2z-z^{2r-1}, y^5-z^{5r-5})$. 
The last two generators show that $y \neq 0 \iff z \neq 0$ along $Z$, when $I_Z$ is 
locally equal to $(x^2, y - z^{r-1})$, thus $Z$ consists of a double structure on the 
smooth curve $C$ with ideal $(x,y-z^{r-1})$ and an embedded point at the origin $p$. 
Therefore if $S$ is a very general surface containing $Z$, then as before $C$ generates 
$\Picloc p$ and is nonzero. The local equation for $S$ has the form 
$ax^2 + y^2z-z^{2r-1} + by^5-bz^{5r-5}$ for units $a,b \in \O_{\Pthree,p}$. 
Passing to the completion and adjusting the variables by appropriate roots of units, the equation 
of $S$ becomes $x^2+y^2z+z^{2r-1}+y^5$. Changing variables via Lemma \ref{dnen}, the equation becomes 
$x^2 + Y^2 Z + Z^{2r-1}$ and we see the $\rdD_{2r}$-singularity. 

To determine the class of $C$ in $\widehat{\Picloc}\, p$ we return to the original form of the 
equation for $S$ and blow up $p$. Following our usual conventions, look on the patch $Z=1$, 
where $S$ has equation $X^2 + aY^2z - az^{2r-3} + bY^5z^3 - bZ^{5r-7}$. As long as $2r-3 \ge 3$, 
that is, $r\ge 3$, we can take advantage of our knowledge of the original singularity, together 
with the fact that this expression is a square mod the third power of the maximal ideal at the origin, 
to conclude that it is this new surface has a $\rdD_{2r-2}$ singularity there; the strict transform 
$\tilde{C}$ of $C$ has ideal $(X,Y-z^{r-3})$ and so passes through the origin transversely to the 
exceptional curve $E_1$ given by $(X,z)$. Now, on the full resolution of singularities $\tilde{C}$ 
maps to a smooth curve and therefore meets exactly one admissible $E_i$, that is, either $E_1$, $E_{n-1}$, 
or $E_n$. By the transversality noted above, however, it does not meet $E_1$ in the next blowup. 
We conclude that it meets one of the other curves and so we have produced the desired subgroup.

Finally, we need an example for which $\Picloc p = \widehat{\Picloc}\, p$. To this end consider 
the surface $S$ defined by $x^2 + y^2 z - z^{n-1}$, which has a $\rdD_n$ singularity at the origin 
by Example \ref{baby}. As above the curve with ideal $(x,z)$ corresponds to $u_1$, so it suffices 
in all cases to find a curve on this surface that gives the element $u_n$ (or $u_{n-1}$). 

For even values of $n$, the curve $(x,y-z^\frac{n-2}{2})$ gives the other generator by an argument 
analogous to, but much easier than, the one for $n$ even in the previous case considered.

For odd values of $n$, write $n=2r+1$, use the form $x^2+y^2z-z^{2r}$, and let $C$ be the curve having 
ideal $(x-z^r, y)$. Looking on the patch $Z=1$ of the blow-up gives the surface $X^2 + Y^2z -2z^{2r-2}$ 
and the curve $(X-z^{r-1}, Y)$, so by induction it suffices to prove that, for $n=5$, that is, $r=2$, 
the curve $C$ having ideal $(x-z^2, y)$ gives one of the classes $u_4, u_5$ in $\Picloc p$, where $S$ 
has equation $x^2+y^2z-2z^4$ and $p$ is the origin. On the patch $Z=1$ of the blow-up $\tilde{S}$, 
the equation is $X^2+Y^2z-Z^2$, which has the $\rdA_3$ at the origin, and $\tilde{C}$ has ideal $(X-z, Y)$. 
A further blow-up, again  on the patch $Z=1$, gives equation $X^2+Y^2z-1$ for $\tilde{S}$ and ideal $(X-1, Y)$, 
for $\tilde{C}$. $C$ meets the exceptional locus only at the smooth point with coordinates $(1,0,0)$ relative 
to this patch, so in the full resolution of singularities, $\tilde{C}$ meets one of the two exceptional 
curves arising from the blow-up of the $\rdA_3$, which correspond to $E_4$ and $E_5$ in the Dynkin diagram.

\section{Lines through a point}

We compute the local Picard group $\Picloc p$ of a very general surface 
$S$ containing $r$ general lines passing through $p$, as well as for 
some special configurations of lines. 

\begin{prop}\label{cone}
Let $Z \subset \Ptwo$ be the union of $r$ distinct points and assume 
that the general minimal degree curve $V$ containing $Z$ is smooth. 
Let $Y \subset \Pthree$ be the cone over $Z$. 
Then the very general surface $S$ of 
high degree containing $Y$ is normal and there is a map 
\[
\theta: \Cl \O_{S,p} \to \Pic V / \langle \O_V (1) \rangle
\]
sending the class of the line $L \subset Y$ to the corresponding point in $Z$. 
Moreover, $\theta$ can be identified with the inclusion 
$\Cl \O_{S,p} \hookrightarrow \Cl \widehat \O_{S,p}$ if $\deg V \leq 3$. 
\end{prop}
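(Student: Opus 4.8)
The plan is to bypass any attempt to identify $\widehat\O_{S,p}$ with an actual cone and instead compute $\Cl\widehat\O_{S,p}$ directly from a one-step resolution, then check that the hypothesis $\deg V \le 3$ is exactly what makes the formal Picard group collapse onto $\Pic V$. Write $d = \deg V$. First I would pin down the singularity of $S$ at $p$. Since $S$ is very general among surfaces of high degree containing the cone $Y$, its local equation at $p$ has the form $f = F_d + F_{d+1} + \cdots$, where each $F_j$ is a general form of degree $j$ vanishing on the directions $Z$. Because $d$ is the minimal degree of a plane curve through $Z$, no $F_e$ with $e < d$ can be nonzero, so $\mult_p S = d$; moreover the leading form $F_d$ is the general degree-$d$ form through $Z$, hence the projectivized tangent cone is the smooth curve $V$.

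Next I would blow up $p$. In the chart $x = u,\ y = uv,\ z = uw$ the strict transform is $\tilde f = F_d(1,v,w) + u\,F_{d+1}(1,v,w) + \cdots$, and its intersection with the exceptional locus $\{u=0\}$ is $V$. Since $V$ is smooth, $(\partial_v F_d, \partial_w F_d)$ never vanishes along it, so $d\tilde f \neq 0$ there and $\tilde S$ is smooth along the entire exceptional curve; thus a single blow-up resolves the singularity at $p$, with irreducible exceptional divisor $V$. Using $\O(E)|_E = \O_{\Ptwo}(-1)$ and $\tilde S\cap E = V$, the normal bundle is $N_{V/\tilde S} = \O_{\Ptwo}(-1)|_V = \O_V(-1)$, so $V^2 = -d < 0$.

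The heart of the argument is the passage from this resolution to the formal local Picard group. Letting $\hat U$ be the formal neighborhood of $V$ in $\tilde S$, I would use the identifications $\Cl\widehat\O_{S,p} = \Pic(\hat U\setminus V) = \Pic\hat U / \langle \O_{\tilde S}(V)|_V\rangle = \Pic\hat U/\langle \O_V(-1)\rangle$. By formal function theory $\Pic\hat U = \varprojlim_n \Pic V_n$ over the infinitesimal neighborhoods $V_n$, and the kernel of each restriction $\Pic V_n \to \Pic V_{n-1}$ is $H^1(V, \I_V^n/\I_V^{n+1}) = H^1(V, \O_V(n))$ (the conormal bundle being $\O_V(1)$), the cokernel vanishing since $H^2$ of a curve is zero. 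By adjunction $K_V = \O_V(d-3)$ and Serre duality, $H^1(V,\O_V(n)) \cong H^0(V, \O_V(d-3-n))^\vee$, which vanishes for every $n \ge 1$ precisely when $d \le 3$. Hence for $d \le 3$ all transition maps are isomorphisms, $\Pic\hat U \cong \Pic V$, and $\Cl\widehat\O_{S,p} \cong \Pic V/\langle \O_V(1)\rangle$.

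Finally I would match generators. Each line $L\subset Y$ lying over a point $z\in Z$ lifts to a curve $\tilde L$ meeting $V$ transversally at $z$, so its image in $\Pic V/\langle\O_V(1)\rangle$ is the class $[z]$; since the classes of these lines generate $\Cl\O_{S,p}$ by Theorem~\ref{gens}, the injection $\Cl\O_{S,p}\hookrightarrow\Cl\widehat\O_{S,p}$ of Proposition~\ref{interpret} agrees with $\theta$ under the identification above. The main obstacle, and the reason for the hypothesis, is the cohomological step: $\deg V \le 3$ is exactly what forces $H^1(V,\O_V(n)) = 0$ for all $n\ge 1$, collapsing the formal Picard group onto $\Pic V$. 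For $d\ge 4$ the nonzero groups $H^1(V,\O_V(1)),\dots,H^1(V,\O_V(d-3))$ contribute extra divisible pieces to $\Cl\widehat\O_{S,p}$ coming from the infinitesimal neighborhoods of $V$, and $\theta$ can no longer be identified with the full inclusion.
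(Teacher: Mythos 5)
Your proof follows essentially the same route as the paper's: one blow-up with exceptional curve $E \cong V$ and self-intersection $\O_V(-1)$, Grothendieck's formal existence theorem to identify $\Pic$ of the blow-up with $\varprojlim \Pic V_n$, surjectivity of the transition maps from $H^2=0$ on a curve, and the vanishing $H^1(V,\O_V(n))=0$ for all $n \geq 1$ exactly when $\deg V \leq 3$ (your Serre-duality justification of this threshold is a nice explicit touch the paper omits), followed by matching the lines with their points of $Z$. The one notational slip is that $\hat U \setminus V$ is meaningless for a formal scheme; as in the paper, you should puncture the blow-up $X'$ of $\Spec \widehat\O_{S,p}$, getting $\Cl \widehat\O_{S,p} \cong \Pic (X' \setminus E) \cong \Pic X' / \langle \O_{X'}(E) \rangle$, and only then pass to the formal neighborhood via $\Pic X' \cong \Pic \hat U$ -- this is a repair of phrasing, not of substance.
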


\begin{proof}
Since we are interested in local properties of $Y$ at $p$, 
consider the affine cone $C(Z) \subset \A^3$ with vertex $p = (0,0,0)$. 
We may assume that $p_1 = (0,0,1)$ so that the line 
$L_1 \subset C(Z)$ has equation $x = y = 0$. 
The ideals of $C(Z)$ and $Z$ agree, so the general surface 
$S$ containing $C(Z)$ has equation $F+G=0$ where $F=0$ defines 
a smooth curve $V \subset \Ptwo$ of degree $d$ and $G$ 
consists of higher degree terms. Blowing up $S$ at $p$, 
the exceptional divisor $E \subset \widetilde S$, 
the projectivized tangent cone for the equation $F+G$, 
appears as the plane curve $V \subset \Ptwo$ and the strict transform 
$\widetilde C(Z)$ consists of $r$ disjoint strict transforms $\widetilde L_i$ 
meeting $E$ transversely at the corresponding points $p_i \in V$. 
For example, on the affine patch $Z=1$ with new variables $x=zX, y=zY$ 
the equation of $E$ is $z^d (F(X,Y,1) + z G(X,Y,z))$ and the strict transform 
of the line $x=y=0$ is $X=Y=0$ corresponding to $(0,0,1)$. 
The self-intersection $E \cap E$ in $\widetilde S$ is given by the line bundle 
$\O_E (-1)$ via the embedding $E \cong V \subset \Ptwo$ because $E$ is the relative 
$\O (1)$ for the proj construction of the blow-up. 

Letting $X = \Spec \widehat \O_{S,p}$, the blow-up $X^\prime \to X$ at $p$ is smooth with 
exceptional divisor $E \cong V$ as above. Now $\Pic X^\prime = \Pic {\widehat X}^\prime$, 
where ${\widehat X}^\prime$ is the formal completion of $X^\prime$ along $E$ \cite[III, 5.1.4]{G}
and $\displaystyle \Pic {\widehat X}^\prime = \lim_{\longleftarrow} \Pic Y_n$, 
where $Y_n$ is the scheme structure on $E$ given by the ideal $\I_E^n$ \cite[II, Exercise 9.6]{AG}. 
The exact sequences 
\[
0 \to \I_E^n / \I_E^{n+1} \to \O_{Y_{n+1}} \to \O_{Y_n} \to 0
\]
yield long exact cohomology sequence fragments
\[
H^1(\I_E^n / \I_E^{n+1}) \to \Pic Y_{n+1} \to \Pic Y_n \to H^2(\I_E^n / \I_E^{n+1})
\]
which show that the maps $\Pic Y_{n+1} \to \Pic Y_n$ are surjective by Grothendieck's vanishing theorem 
\cite[III, Theorem 2.7]{AG}, since the $\I_E^n / \I_E^{n+1}$ are coherent sheaves on $E$. 
Therefore the composite map 
$\displaystyle \Pic X^\prime \cong \Pic {\widehat X}^\prime = \lim_{\longleftarrow} \Pic Y_n \to \Pic Y_1 = \Pic E$
is also surjective. 

The formal Picard group of $S$ at $p$ is 
$\Cl \widehat \O_{S,p} = \Cl X  \cong \Pic (X - \{p\}) \cong \Pic (X^\prime - E)$ and 
$E \cong V$ as above with $E \cap E$ corresponding to $\O_V (-1)$ under this isomorphism, 
so we obtain the commutative diagram 
\begin{equation}\label{CD}
\begin{array}{ccccccccc}
0 & \to & \Z & \stackrel{\cdot E}{\to} & \Pic X^\prime & \to & \Pic (X^\prime - E) \cong \Cl \widehat \O_{S,p} & \to & 0 \\
& & \downarrow & & \downarrow & & \downarrow & & \\
0 & \to & \Z & \stackrel{\cdot E \cap E}{\to} & \Pic E & \to & \Pic E / \langle E \cap E \rangle \cong \Pic V / \langle \O_V (1) \rangle & \to & 0 
\end{array}
\end{equation}
Composing the inclusion $\Cl \O_{S,p} \hookrightarrow \Cl \widehat \O_{S,p}$ with vertical map on the 
right defines $\theta$ and the images of the lines $L \subset Y$ generate $\Cl \O_{S,p}$ as a 
subgroup of $\Cl \widehat \O_{S,p} \cong \Pic (X^\prime - E)$ by Theorem \ref{gens}; 
these classes map to the corresponding points $p \in Z$ under the vertical map on the right, verifying 
the statement about the map $\theta$. 

If $d \leq 3$, then $H^1 (E, \I_E^n / \I_E^{n+1}) \cong H^1 (V, \O_V (n)) = 0$ so the maps 
$\Pic Y_{n+1} \to \Pic Y_n$ are isomorphisms and so are the vertical maps in 
diagram (\ref{CD}), therefore $\theta$ is injective. 
\end{proof}

\begin{cor}\label{rlines} Let $Y$ be the union of $r$ general lines $L_i$ passing through a point $p$. 
Then the very general surface $S$ of high degree containing $Y$ is normal and the local Picard group 
is described as follows:
\begin{enumerate}
\item[(a)] $r \leq 2 \Rightarrow \Cl \O_{S,p}=0$.
\item[(b)] $3 \leq r \leq 5 \Rightarrow L_i \mapsto 1$ under an isomorphism $\Cl \O_{S,p} \cong \Z / 2 \Z$. 
\item[(c)] $r \geq 6 \Rightarrow L_i \mapsto e_i$ under an isomorphism $\Cl \O_{S,p} \cong \Z^{r}$. 
\end{enumerate}
\end{cor}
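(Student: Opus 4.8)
The plan is to realize $Y$ as a cone and apply Proposition~\ref{cone}. Projecting from $p$ identifies $Y$ with the cone over a set $Z$ of $r$ general points of $\Ptwo$, so the relevant curve $V \subset \Ptwo$ is the general curve of least degree $d$ through $Z$, i.e.\ the least $d$ with $\binom{d+2}{2} > r$. This gives $d=1$ for $r \le 2$, $d=2$ for $3 \le r \le 5$, and $d \ge 3$ for $r \ge 6$, matching the three cases. I would first verify the standing hypothesis of Proposition~\ref{cone}, that the general such $V$ is smooth: for $r$ general points this follows from Bertini together with the fact that general points impose independent conditions on forms of degree $d$, with the boundary cases of a unique minimal curve (e.g.\ $r=9$, $d=3$) checked separately. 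With $V$ smooth we obtain $\theta \colon \Cl \O_{S,p} \to \Pic V / \cyc{\O_V(1)}$ sending each line $L_i$ to the class $[p_i]$ of its point.

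Cases (a) and (b) then fall out of the structure of $\Pic V$, using that $\deg V \le 3$ makes $\theta$ the injection $\Cl \O_{S,p} \hookrightarrow \Cl \widehat\O_{S,p}$, so that $\Cl \O_{S,p} = \Im \theta$. In (a), $V \cong \Pone$ with $\O_V(1)$ a generator of $\Pic V = \Z$, hence $\Pic V / \cyc{\O_V(1)} = 0$ and $\Cl \O_{S,p}=0$. In (b), $V$ is a smooth conic, still $\cong \Pone$, but now $\O_V(1)=\O_{\Pone}(2)$, so $\Pic V / \cyc{\O_V(1)} \cong \Z/2\Z$ and each point $p_i$, a degree-one divisor, maps to the generator $1$. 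Since the $L_i$ generate $\Cl \O_{S,p}$ and map to $1 \ne 0$, the image is all of $\Z/2\Z$ and $L_i \mapsto 1$, giving (b).

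For case (c) I would not need $\theta$ to be injective, which Proposition~\ref{cone} only guarantees for $\deg V \le 3$: the $L_i$ generate $\Cl \O_{S,p}$ by Theorem~\ref{gens}, so it suffices to show they are $\Z$-linearly independent. Composing the inclusion $\Cl \O_{S,p} \hookrightarrow \Cl \widehat\O_{S,p}$ with the right-hand vertical map of diagram~(\ref{CD}) yields a homomorphism to $\Pic V / \cyc{\O_V(1)}$ carrying $L_i$ to $[p_i]$, so any relation $\sum a_i L_i = 0$ descends to $\sum a_i [p_i] = 0$ there. Equivalently, I must show $[p_1], \dots, [p_r], \O_V(1)$ are $\Z$-independent in $\Pic V$. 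Writing $\Pic V = \Z \oplus \Pic^0 V$ with $\Pic^0 V = J(V)$ the Jacobian and $[p_i]$ of degree one, such a relation reads $\sum a_i \alpha_i + b\beta = 0$ in $J(V)$, where $\alpha_i$ is the Abel--Jacobi image of $p_i$ and $\beta = [\O_V(1)] - d[O]$ is fixed by $V$; so it is enough that $\alpha_1, \dots, \alpha_r, \beta$ have no nontrivial $\Z$-relation in $J(V)$.

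The heart of the matter, and the step I expect to be the main obstacle, is this independence for a very general configuration. Each fixed nonzero integer relation cuts out a proper analytic condition: if some $a_j \ne 0$, then with the other data fixed the locus of $p_j \in V$ with $a_j \alpha_j$ equal to a prescribed element meets the Abel--Jacobi curve $\mathrm{AJ}(V) \subset J(V)$ in a coset of the finite group $J(V)[a_j]$, hence in finitely many points; the remaining case $a_1 = \dots = a_r = 0$ forces $\beta$ to be torsion, which fails for very general $V$. As there are only countably many integer relations, a very general choice of $Z$ (and hence of $S$) avoids them all, so the $L_i$ are free generators and $\Cl \O_{S,p} \cong \Z^r$ with $L_i \mapsto e_i$. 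I would take care that this countable family of conditions is compatible with those already defining ``very general $S$,'' and that the dependence of $\alpha_i, \beta$ on both the points and the varying curve $V$ does not obstruct the genericity argument.
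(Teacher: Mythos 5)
Your proposal is correct and takes essentially the same approach as the paper: Proposition~\ref{cone} with $\deg V \le 3$ settles (a) and (b), and case (c) rests on the same key claim---that for a very general configuration the $p_i$ generate a free subgroup of $\Pic V/\langle \O_V(1)\rangle$---proved by excluding countably many integer relations. The only difference is in how that genericity step is implemented: the paper works on the incidence variety $W$ with a universal line bundle and semicontinuity of $h^0$ (which is also what makes each of your ``proper analytic conditions'' Zariski-closed), while you argue fiberwise via Abel--Jacobi and torsion cosets; the two arguments are interchangeable.
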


\begin{proof}
The cases $r=1$ or $r=2$ are clear because $S$ is smooth at $p$.
For part (b), $r$ general points in $\Ptwo$ lie on a smooth conic $V$. 
According to the proposition with $d=2$, there is an injection 
$\Cl \O_{S,p} \hookrightarrow \Pic V / \langle \O_V (1) \rangle \cong \Z / 2 \Z$ 
sending the classes $L_i$ to the corresponding points $p_i \in V$, meaning the 
line bundle $\O_V (p_i)$ of degree $1$.  

Now suppose $r \geq 6$. Letting $d$ be the largest integer satisfying 
$\displaystyle \left(\begin{array}{c}d+1 \\ 2 \end{array}\right) \leq r$, 
$r$ points in general position lie on smooth curves $V$ of degree $d$ and on no curves 
of smaller degree because $r$ points in general position impose independent conditions 
of fixed degree. Since $r \geq 6 \Rightarrow d \geq 3$, the smooth curves $V$ are 
not rational. 

We claim that for sufficiently general $p_i$ and $V$, the $p_i$ generate a free subgroup in 
$\Pic V / \langle \O_V (1) \rangle$. To see this, let 
\[
W = \{(p_1,p_2, \dots p_r,V) \in (\Ptwo)^r \times \mathbb P H^0 (\O_{\Ptwo} (d)): 
p_i \in V, V \;\;{\rm {smooth}}\}
\]
so that each $w \in W$ is associated to a tuple $(p_1^w, p_2^w, \dots p_r^w, V^w)$. 
For $W$ there is the universal curve 
$\mathcal V = \{(x,w) \in \Ptwo \times W:x \in V^w\} \subset \Ptwo \times W$ 
and $r$ effective Cartier divisors 
$\mathcal P_i = \{(x,w) \in \Ptwo \times W: x = p_i^w\} \subset \mathcal V$ 
giving rise to corresponding line bundles 
$\mathcal L_i = \O_{\mathcal V} (\mathcal P_i)$ 
on $\mathcal V$. For fixed tuple $(n_i,m)=(n_1,\dots,n_r,m) \in \Z^{r+1}$, 
consider the subset $U(n_i,m) \subset W$ defined by 
$\O_V (\sum n_i p_i) \otimes \O_V (m) \neq 0$ in $\Pic V$. 
If $\sum n_i + md \neq 0$, then $U(n_i,m) = W$ by reason of degree.  
If $\sum n_i + md=0$, consider the line bundle 
${\mathcal M} = \otimes {\mathcal L_i}^{n_i} \otimes \O_{\mathcal V} (m)$
on $\mathcal V$. 
Since ${\mathcal M}_{V^w}$ is a line bundle of degree zero, 
it is non-trivial in $\Pic V^w$ if and only if $H^0(V^w,{\mathcal  dM}_{V^w}) =0$, 
but this condition is open in $W$ by semicontinuity (${\mathcal M}$ is flat 
over $W$ by constancy of Hilbert polynomial). 
Thus each $U(n_i,m) \subset W$ is an open, dense subset mapping dominantly 
to $(\Ptwo)^r$. 
Taking the countable intersection over all such tuples $(n_i,m)$, we find that the 
general set of $r$ points is contained in a smooth degree $d$ curve $V$ with no 
non-trivial relations in $\Pic V /  \langle \O_V (1) \rangle$. 

Since the $p_i$ have no relations, neither do the lines $L_i$ under the map $\theta$ from 
Proposition \ref{cone}, therefore $\Cl \O_{S,p}$ is a group generated by the $r$ lines 
$L_i$ which surjects onto the free group $\Z^r$, hence the kernel is zero and the $L_i$ 
map to the standard generators $e_i$. 
\end{proof}

\begin{ex}\label{pinwheel}{\em 
In Corollary \ref{rlines} it is essential that the lines be in general position. 

(a) If $Y$ consists of $r$ planar lines through $p$, the general surface $S$ containing $Y$ is 
smooth at $p$ and $\Cl S = \Pic S$ is freely generated by $\O_S (1)$ and the lines $L_{i}$. 

(b) For a more interesting configuration, fix $r \geq 2$ distinct $c_1, \dots c_r \in \C$ 
and consider the curve $Y \subset \Pthree$ with ideal 
\[
I_Y = (xy,yz,\Pi_{j=1}^{r} (x+c_j z)).
\] 
Clearly $Y$ is the union of $r$ lines $L_1, \dots L_r$ with ideals $I_{L_j} = (y,x+c_j z)$ 
and another line $L_0$ with ideal $(x,z)$; $Y$ resembles a pinwheel. 
The general surface $S$ containing $Y$ has local equation 
\[
xy + ayz + b \Pi (x+a_j z)
\]
for units $a,b \in \O_{\Pthree,p}$. 
Setting $X = x + az$ this becomes $X y + b \Pi (X + (c_j - a) z)$. 
Now write $b \Pi (X + (c_j-a) z) = X P + b \Pi (c_j - a) z^r$ and set 
$Y = y + P$ so that the equation for $S$ becomes $X Y - u z^r$, 
where $u = - b \Pi (c_j - a)$ is a unit, exhibiting the equation of an 
$A_{r-1}$ singularity, which has formal Picard group 
$\Cl \widehat \O_{S,p} \cong \mathbb Z / r \mathbb Z$. 

The image of the line $L_0$ is easy to identify, for its ideal is 
$(x, z)=(X, z)$, which corresponds to $1 \in \Z / r \Z$ by Example \ref{image} (a). 
For $1 \leq j \leq r$, the ideal for $L_j$ is
\[
(y, x + c_j z)=(y, X + (c_j - a) z)=(Y - P, X + (c_j - a) z)
\]
Noting that $P \equiv v z^{r-1}$ mod $(X+(c_j-a) z)$ for some scalar $v$ 
(because $P$ is homogeneous in $X$ and $z$ of degree $r-1$), the ideal 
becomes $(X+(c_j-a) z, Y - v z^{r-1})$, which corresponds to $r-1$ in 
$\Z / r \Z$ by Example \ref{image}(c), so $L_j$ corresponds to $-1$.
\em}\end{ex}

\begin{ex}\label{rational}{\em
Fix a smooth conic $V \subset \Ptwo$ and $r \geq 5$ points $p_i \in V$, letting $Y \subset \Pthree$ 
be the cone over $Z = \{p_1, \dots, p_r\}$ with vertex $p$. 
Since the conic $V$ is uniquely determined by $Z$, Proposition \ref{cone} tells us that the 
very general high degree surface $S$ containing $Y$ is normal and that 
$\Cl \O_{S,p} = \Cl \widehat \O_{S,p} \cong \Pic V / \langle \O_V (1) \rangle \cong \Z / 2 \Z$ 
is generated by the class of the lines $L_i \subset Y$. 
 
In fact, $p$ is the {\em only} singularity on $S$ and $\Cl S \cong \Z^{r+1}$ 
is freely generated by $O_S (1)$ and the lines $L_i$ by \cite[Proposition 2.2 and Theorem 1.1]{BN}, 
so Jaffe's exact sequence (see Remark \ref{kadinsky} (a)) simplifies to 
\[
0 \to \Pic S \to \Cl S = \Z^{r+1} \to \Picloc p \cong \Z / 2 \Z \to 0.
\]
Since the lines $L_i$ map to the generator of $\Picloc p$ and $\O_S (1)$ maps to zero, 
we can read off the Picard group of $S$ as 
$\Pic S = \{\O_S (a) + \sum b_i L_i \in \Cl S: 2 | \sum b_i\}$. 
We compute many interesting examples of Picard groups of singular surfaces in \cite{picgps}. 
\em}\end{ex}

Corollary \ref{rlines} is interesting because we are able to compute $\Cl (\O_{S,p})$ and 
$\Pic S$ even though we have not identified $\Cl (\widetilde \O_{S,p})$ or the analytic isomorphism 
class of the singularity $p$. We remedy this in our last result, which addresses Srinivas's question~\cite[Question 3.1]{srinivas} mentioned in the introduction.

\begin{prop}\label{ellipticcone}
Let $V \subset \Pthree$ be the cone over a smooth elliptic curve $C$ with vertex $p$. 
Then for any finitely generated subgroup $H$ of 
$G = \Cl (\widehat \O_{V,p}) \cong \Pic C / \langle \O_C (1) \rangle$, 
there exists a surface $S \subset \Pthree$ having singularity at $q$ analytically 
isomorphic to $p \in V$ with class group isomorphic to $H$ or a subgroup $H' \subset G$ 
such that $|H':H| = 3$.
\end{prop}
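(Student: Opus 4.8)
The plan is to realize $H$ (or $H'$) by applying Theorem~\ref{method} to a base locus consisting of the cone over a carefully chosen finite set of points on $C$, thickened by an embedded point at the vertex that pins down the analytic type of the resulting singularity. The lines of the cone will generate $\Cl \O_{S,q}$, Proposition~\ref{cone} (valid for $\deg V = 3$) will identify this group with a subgroup of $G$, and the Mather--Yau theorem will guarantee that $q \in S$ is analytically the cone over $C$ rather than a nearby elliptic cone.

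First I would choose the points. Write $\phi\colon G \to \Z/3\Z$ for the map induced by $\deg\colon \Pic C \to \Z$ (well defined since $\deg \O_C(1)=3$). Every point class $[\O_C(p)]$ lies in $\phi^{-1}(1)$, and $p \mapsto [\O_C(p)]$ identifies $C$ with the degree-one classes; hence the subgroup generated by any set of point classes surjects onto $\Z/3\Z$. If $\phi(H)=\Z/3\Z$, then each degree-zero element of $H$ is a difference of two degree-one elements of $H$, so $H$ is generated by point classes lying in $H$, and I can pick $p_1,\dots,p_r$ with $\langle [\O_C(p_i)]\rangle = H$. If instead $H \subseteq \ker\phi$, I fix a flex $p_0$ of $C$, so $3[\O_C(p_0)] = [\O_C(1)] = 0$ in $G$ and $t := [\O_C(p_0)]$ has order $3$; choosing (via Abel--Jacobi) points $p_k$ with $[\O_C(p_k)] = t + h_k$ for a generating set $\{h_k\}$ of $H$ makes every point class congruent to $t$ modulo $H$, so $\langle [\O_C(p_i)]\rangle = H' := H + \cyc{t}$ with $|H':H|=3$.

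Next I would build the base locus and force the singularity type. Let $f$ be the cubic defining $C$, so the affine cone $V$ has local equation $f$ and $J_f=(f_x,f_y,f_z)$ is $\fm$-primary with $\fm^4 \subseteq J_f$. Let $L_1,\dots,L_r$ be the rays through the $p_i$ (which lie on $V$), and set
\[
\I_Y = \left(\bigcap_i I_{L_i}\right) \cap (f, f_x^3, f_y^3, f_z^3).
\]
Since $V(f,f_x^3,f_y^3,f_z^3)=V(f,J_f)=\{q\}$, the scheme $Y$ is the union of the $L_i$ with an embedded point at the vertex $q$; it is superficial, its codimension-two components are exactly the $L_i$, and $f \in \I_Y$. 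For a very general $S$ of degree $d \gg 0$ in $\I_Y$, Theorem~\ref{method} gives that $S$ is normal and $\Cl \O_{S,q}$ is generated by the classes of the $L_i$. Locally at $q$ the equation of $S$ is $a_0 f + \sum a_i f_{x_i}^3$ with $a_0$ a unit; dividing by $a_0$, the local generator may be taken as $g = f + \sum a_i' f_{x_i}^3$, whence $g \in J_f$ and $g_{x_j} \equiv f_{x_j} \bmod \fm J_f$, so $(g,g_{x_1},g_{x_2},g_{x_3}) = J_f$ by Nakayama. As in the Example following Theorem~\ref{ufd}, the moduli algebras then agree, $A(g) \cong A(f)$, so by Mather--Yau the singularity $q \in S$ is analytically isomorphic to $p \in V$ (equivalently $g-f \in J_f^3 \subseteq \fm J_f^2$, so Lemma~\ref{Ruiz} gives the formal isomorphism directly).

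Finally I would compute the class group. Because $g$ has tangent cone $f$ and $\widehat\O_{S,q}\cong\widehat\O_{V,p}$, the blow-up of $q$ has exceptional divisor $E \cong C$ with $E\cdot E = \O_C(-1)$, and the formal Picard computation of Proposition~\ref{cone}—which depends only on the formal structure at $q$—applies with the smooth cubic $V=C$ of degree $3$, producing the injection $\theta\colon \Cl \O_{S,q}\hookrightarrow \Pic C/\cyc{\O_C(1)} = G$ sending $[L_i]$ to $[\O_C(p_i)]$. Combined with the generation statement of Theorem~\ref{method}, this yields $\Cl \O_{S,q} \cong \langle [\O_C(p_i)]\rangle$, which is $H$ in the first case and $H'$ with $|H':H|=3$ in the second. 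The main obstacle is the third step above: one must choose a single base locus that simultaneously keeps the lines $L_i$ as honest reduced codimension-two components generating the class group and constrains the local equation to stay within $\fm J_f^2$ of $f$, so that the delicate analytic type (the $j$-invariant of $C$) is preserved, all while remaining general enough for the Noether--Lefschetz input of Theorem~\ref{method} and the formal Picard comparison of Proposition~\ref{cone} to hold at once.
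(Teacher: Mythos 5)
Your proposal is correct and follows essentially the same route as the paper's proof: reduce to a subgroup generated by point classes --- adjoining the class $t$ of a flex, of order three, when $H$ consists of degree-zero classes, which is exactly where the index-$3$ caveat enters (just be sure the flex $p_0$ itself is included among the chosen points, since $\langle t+h_k \rangle_k$ alone need not equal $H+\cyc{t}$, e.g.\ when some $h_k$ has infinite order) --- then take as base locus the cone lines over those points together with an embedded point at the vertex that forces the Jacobian ideal, so that Mather--Yau/Ruiz pins down the analytic type, and finally identify the line classes with the corresponding points of $C$ via the formal computation of Proposition~\ref{cone}. The only differences are implementation details: the paper cuts out its base locus by intersecting the cone with two auxiliary cones of high degree through the $P_i$, the embedded point arising automatically from the failure of $(F,T_1,T_2)$ to generate the total ideal, rather than by your explicit ideal $\left(\bigcap_i I_{L_i}\right) \cap (f,f_x^3,f_y^3,f_z^3)$, and it reduces to point-generated subgroups by an induction that shrinks divisor representations rather than by your cleaner dichotomy via the degree map $G \to \Z/3\Z$.
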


\begin{proof}
The subgroup $H$ is generated by the classes in $G$ of a finite number of divisors on $C$. 
First suppose that $H=\cyc{P_1, \dots, P_r}$ is generated by points $P_i \in C$. 
Then $P_1, \dots, P_r$ are cut out on $C$ by smooth curves $D_1$ and $D_2$, whose degrees 
$d_1, d_2$ we can choose as large as we like. Since $V$ has an isolated singularity at $p$, 
$J_F = (F, F_x, F_y, F_z)$ contains some power $\fm^k$ of the maximal ideal $\fm = \fm_{V,p}$; 
choose $D_i$ of degree $\geq k+2$. Let $T_i$ be the defining (homogeneous) polynomial of $D_i$, 
and consider the base locus $Z$ defined in $\P^3$ by the ideal $(F, T_1, T_2)$. 
By construction, its support is the union of the lines $L_j, j=1, \dots, r$ joining $P_j$ to $p$. 
The very general surface $S$ containing $Z$ has class group generated by the $L_i$ by Theorem \ref{gens}; 
furthermore, its equation is of the form $H=F + aT_1 + bT_2$, where $a$ and $b$ are polynomials that 
are units in $\O_{\P^3,p}$. Now, $H, H_x, H_y, H_Z \in J_F$ by construction, since the contribution 
from the $G_i$ are in $\fm^{k+1}$. Also, $H, H_x, H_y, H_z$ generate $J_F$ mod $\fm J_F$, so they 
generate $J_F$. By Mather-Yau, $\O_{S,p}$ is analytically isomorphic to $\O_{V_p}$. As in the 
proof of Theorem \ref{rlines}, we can identify the lines with their corresponding points on $C$, 
so they generate the desired subgroup $H$ of $G$.

For the general case we claim that any generator $F$ of $H$ can be taken to have the 
form $P$ or $P-Q$ for suitable $P, Q \in C$: If $F = (P_1+ ... + P_m) - (Q_1+...+Q_n)$, where 
the $P_i, Q_j$ can repeat and $m>1$, then there is a unique $R \in C$ such that 
$P_1+P_2+R \in |H^0 (\O_C(1))|$, so $F$ is congruent to $(P_3+...+P_n) - (Q_1+...+Q_n+R)$, 
and this involves fewer than $m+n$ points. A similar calculation holds when $n>1$; by induction 
we get it down to either $P, -Q$ (which generates the same subgroup as $Q$), or $P-Q$, as claimed. 
Furthermore, any generator $P-Q$ can be written as $P'-Q_0$ for any desired $Q_0$; this follows 
from the group law on $C$.

Now write $H=\cyc{P_1, P_2, \dots, P_r, P'_1-Q_0, \dots P'_s - Q_0}$. 
If $r \geq 1$, take $Q_0$ to be $P_1$ so that $H=\cyc{P_1, P'_1, \dots, P'_s}$ and we are back 
in the first case. Otherwise, take $Q_0$ to be an inflection point, so that $3Q_0 = 0$ in $G$; 
now $H \subset H'=\cyc{P_1, P_2, \dots, P_r, P'_1, \dots P'_s, Q_0}$ is a subgroup of index $3$, 
and as above $H'$ can be realized as the class group of a singularity analytically isomorphic 
to $\Spec\O_{V,p}$.
\end{proof}

\begin{rmk}{\em
It is interesting to note that while the curves $F, D_1, D_2$ generate the ideal 
{\em sheaf} of the union of the points $P_i$, the fact that they do not generate the 
{\em total} ideal creates an embedded point at the origin in the intersection of their cones; 
this is what guarantees the correct analytic isomorphism class of the very general surface.}
\end{rmk}

\begin{rmk}{\em
The method employed cannot be used to produce surfaces having the desired analytic isomorphism class 
and subgroups generated purely by elements of degree $0$; however, this is far from a proof that no such surface exists. We know of no example of a finitely generated subgroup of the class group of, say, a complete normal hypersurface singularity that provably cannot occur as the class group of an analytically 
isomorphic surface singularity. Srinivas in ~\cite[Example 3.9]{srinivas} makes a similar remark {\em vis-\`a-vis} 
the surface singularity defined by $x^2+y^3+z^7$ -- which is a UFD whose completion has class group $\C$ -- 
that ``presumably the finitely generated subgroup [{\em i.e.}, the class group of an analytically isomorphic 
singular point] \dots can be of arbitrary rank \dots" To be sure, our example is easier to work with, since 
we have at our disposal a specific local ring with a ready supply of algebraic cycles to use as divisors. With the evidence at hand, we are led to pose the following.
}\end{rmk}

\begin{ques} For a given complete Gorenstein ring $A$ essentially of finite type over $\C$, 
does {\em every} finitely generated subgroup of $\Cl A$ arise as the image of 
$\Cl B \hookrightarrow \Cl A$ for some local ring of essentially finite type over $\C$ such that $\widehat B = A$?  \end{ques}

\end{document}